\pgfplotsset{compat=1.14}
\crefname{figure}{Figure}{Figures}
\tikzstyle{vertex}=[circle, draw, inner sep=0pt, minimum size=3pt]
\theoremstyle{plain}
\newtheorem{thm}{Theorem}[section]
\newtheorem{proposition}[thm]{Proposition}
\newtheorem{lem}[thm]{Lemma}
\newtheorem{problem}[thm]{Problem}
\newtheorem{cor}[thm]{Corollary}
\crefname{lem}{Lemma}{Lemmas}
\crefname{thm}{Theorem}{Theorems}
\theoremstyle{definition}
\newtheorem{definition}[thm]{Definition} 
\crefname{defn}{Definition}{Definitions} 
\newtheorem{ex}[thm]{Example}
\crefname{ex}{Example}{Examples}
\newtheorem{lemma}[thm]{Lemma}
\newtheorem{conjecture}{Conjecture} 
\theoremstyle{remark}
\newtheorem{remark}[thm]{Remark}
\newcommand{\Pin}{\textnormal{Pin}}
\newcommand{\Pk}{\textnormal{Pk}}
\newcommand{\Valley}{\textnormal{Vy}}
\newcommand{\V}{\textnormal{Vale}}
\newcommand{\vp}{\varphi}
\newcommand{\Orb}{\textnormal{Orb}}
\newcommand{\Z}{\mathbb{Z}}
\newcommand{\N}{\mathbb{N}}
\newcommand{\Sn}[1][n]{\mathfrak{S}_{#1}}
\newcommand{\VV}{\mathcal V}
\newcommand{\sS}{\mathfrak S}
\newcommand{\nPV}[1][PV]{N_{#1}}
\title{
A formula for enumerating permutations\\with a fixed pinnacle set
}
\author{Alexander Diaz-Lopez}
\address[A.~Diaz-Lopez]{Department of Mathematics \& Statistics\\
  Villanova University\\
  Villanova, PA 19085}
\email{\textcolor{blue}{\href{mailto:alexander.diaz-lopez@villanova.edu}{alexander.diaz-lopez@villanova.edu}}}
\author{Pamela E. Harris}
\address[P. E. Harris]{Department of Mathematics and Statistics, Williams College, United States}
\email{\textcolor{blue}{\href{mailto:peh2@williams.edu}{peh2@williams.edu}}}
\author{Isabella Huang}
\address[I. Huang]{Department of Mathematics and Statistics, Williams College, United States}
\email{\textcolor{blue}{\href{mailto:ih5@williams.edu}{ih5@williams.edu}}}
\author{Erik Insko}
\address[E. Insko]{Department of Mathematics\\ 
Florida Gulf Coast University\\ 
Fort Myers, Florida 33965}
\email{\textcolor{blue}{\href{mailto:einsko@fgcu.edu}{einsko@fgcu.edu}}}
\author{Lars Nilsen}
\address[L. Nilsen]{Department of Mathematics\\ 
Florida Gulf Coast University\\ 
Fort Myers, Florida 33965}
\email{\textcolor{blue}{\href{mailto:lnilsen0182@eagle.fgcu.edu}{lnilsen0182@eagle.fgcu.edu}}}
\begin{document}
\subjclass[2010]{05A05, 05A15}

\keywords{Pinnacles of permutations, peaks of permutations, Foata-Strehl group action on $\sS_n$.}

% \tableofcontents
\begin{abstract}
In 2017 Davis, Nelson, Petersen, and Tenner pioneered the study of pinnacle sets of permutations and asked whether there exists a class of operations, which applied to a permutation in $\sS_n$, can produce any other permutation with the same pinnacle set and no others. In this paper, we adapt a group action defined by Foata and Strehl to provide a way to generate all permutations with a given pinnacle set. From this we give a closed non-recursive formula enumerating permutations with a given pinnacle set. Thus answering a question  posed by Davis, Nelson, Petersen, and Tenner.
\end{abstract}

\maketitle

\section{Introduction}
Let $\N$ denote the set of the nonnegative integers. For $n\in\N$, let $[n] = \{1,2,\dots,n\}$ and let  $\sS_n$ denote the set of permutations $\pi=\pi_1\pi_2\cdots\pi_n$ of $[n]$. 
Given a permutation $\pi=\pi_1\pi_2\cdots \pi_n \in \sS_n$, throughout we let $\pi_0=\pi_{n+1}=\infty$.
A permutation $\pi=\pi_1\pi_2 \cdots \pi_n$ has 
a \textbf{descent} at index $i$ if $\pi_i>\pi_{i+1}$,
an \textbf{ascent} at index $i$ if $\pi_{i}<\pi_{i+1}$,
 a \textbf{peak} at index $i$ if $\pi_{i-1}<\pi_i>\pi_{i+1}$, and
 a \textbf{valley} at index $i$ if $\pi_{i-1}>\pi_i<\pi_{i+1}$.
Whereas, the value $\pi_i$ of a permutation is a \textbf{pinnacle} if $\pi_{i-1}<\pi_i>\pi_{i+1}$, and a \textbf{vale} if $\pi_{i-1}>\pi_i<\pi_{i+1}$. 
 Then the 
\textbf{peak set} of a permutation $\pi\in\sS_n$ is $\Pk(\pi)=\{i\in[n]:i\text{ is a peak of  }\pi\}$, the 
\textbf{pinnacle set} of $\pi$ is the set $$\Pin(\pi) = \{ i\in[n] : i \text{ is a pinnacle of }\pi\},$$ the \textbf{valley set} of $\pi$ is 
$\Valley(\pi)=\{i\in[n]:i\text{ is a valley of  }\pi\}$, and the \textbf{vale set} of $\pi$ is the set $$\V(\pi)=\{i\in[n]:i\text{ is a vale of  }\pi\}.$$
Note that we can also think of the pinnacle set as the image of the peak set under the function $\pi$, and we can think of the vale set as the image of the valley set under the function $\pi$. 
For example, the permutation $\pi=15264387$ has pinnacle set $\Pin(\pi)=\{5,6,8\}$, peak set $\Pk(\pi)=\{2,4,7\}$, vale set $\V(\pi) = \{1,2,3,7\}$ and valley set  $\Valley(\pi)=\{1,3,6,8\}$.

Although, the notions of pinnacles and peaks (vales and valleys) capture a sense of a rise and fall (fall and rise) in a permutation, they behave rather differently. To capture this difference, we consider $T\subset[n]$ and let $\Pk(T;n)=\{\pi\in\sS_n:\Pk(\pi)= T\}$, $\Pin(T;n)=\{\pi\in\sS_n:\Pin(\pi)=T\}$, and present some previous results in the study of peaks and pinnacles of permutations.  In 2013, Billey, Burdzy, and Sagan presented a result regarding the enumeration of permutations in $\sS_n$ with a specified $n$-admissible peak set $T$. That is, $T\subset[n]$ such that $\Pk(T)\neq\emptyset$. Their main result is as follows.

\begin{thm}[Billey, Burdzy, and Sagan 2013  \cite{peakpaper}]\label{thm:BBS}
If $T = \{ i_1 < \cdots < i_s \}$ is an $n$-admissible peak set, then 
\begin{align}
|\Pk(T;n)|=p(n) 2^{n-|T| -1}\label{eq:BBS}    
\end{align} 
where $p(n)$ is a polynomial depending on $T$ such that $p(m)$ is an integer for all integral $m$ and $\deg p(n) = i_s -1$. 
\end{thm}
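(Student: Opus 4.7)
My approach would be to prove the result via inclusion-exclusion on the peak-set condition, reducing to a ``peak-subset'' count that factors cleanly, and then establishing the factored form by a structural description of peak-free blocks.

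First, I would define the superset counting function
$$\hat{P}(T;n) := |\{\pi \in \mathfrak{S}_n : \Pk(\pi) \subseteq T\}|,$$
so that M\"obius inversion on the Boolean lattice of subsets of $T$ gives
$$|\Pk(T;n)| = \sum_{S \subseteq T}(-1)^{|T|-|S|}\, \hat{P}(S;n).$$
Since $2^{n-|S|-1} = 2^{|T|-|S|} \cdot 2^{n-|T|-1}$, it suffices to establish a factorization $\hat{P}(S;n) = 2^{n-|S|-1}\, q_S(n)$ with each $q_S$ an integer-valued polynomial in $n$ of degree at most $i_s - 1$; the alternating sum above then factors as $2^{n-|T|-1}$ times an integer-valued polynomial of degree at most $i_s - 1$, and a separate check of leading coefficients (only subsets containing $i_s$ contribute to the top degree) pins that degree at exactly $i_s - 1$.

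To establish the factored form for $\hat{P}(S;n)$, I would slice a permutation $\pi$ with $\Pk(\pi) \subseteq S$ into blocks demarcated by the positions in $S$ together with the sentinels $\pi_0 = \pi_{n+1} = \infty$. The absence of interior peaks forces each block to be valley-shaped (strictly decreasing, then strictly increasing). For a block of length $\ell$, once the set of values is fixed, there are $2^{\ell - 2}$ valley-shaped orderings because each non-extremal interior value may be independently placed on the descending or ascending side of the valley; multiplying these local contributions across all blocks collapses to the global factor $2^{n - |S| - 1}$. The remaining factor $q_S(n)$ then counts the number of ways to distribute the values $[n]$ among the blocks consistent with the peak-dominance condition at each $i_j \in S$, and an induction on $|S|$, peeling off the largest peak position, should yield that $q_S(n)$ is an integer-valued polynomial in $n$ of degree exactly $i_s - 1$.

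The principal obstacle is managing the coupling at each peak position, since the value at $i_j$ must simultaneously dominate the block ending at $i_j$ and the block starting at $i_j$. I would address this by processing peak positions in decreasing order of their values, so that placing the next-largest available value at a given peak slot automatically witnesses both dominance inequalities, enabling the induction to go through. The degree bound $i_s - 1$ is the natural one because only the first $i_s$ positions participate in any peak constraint; after position $i_s$ the permutation tail must be a single monotone run that absorbs the remaining values in exactly one way, contributing a choice independent of $n$, so the polynomial growth in $n$ is concentrated entirely in the allocation of values to the first $i_s$ positions.
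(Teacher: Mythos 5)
This is Theorem 1.1 in the paper, which is \emph{cited} from Billey, Burdzy, and Sagan and stated without proof; there is no argument in the paper to compare yours against. (Their original proof proceeds by a recursion on the largest peak position $i_s$, not by M\"obius inversion.) So I will assess your sketch on its own terms.

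The M\"obius-inversion reduction is sound: if $\hat{P}(S;n) := |\{\pi : \Pk(\pi)\subseteq S\}|$, then indeed $|\Pk(T;n)|=\sum_{S\subseteq T}(-1)^{|T\setminus S|}\hat{P}(S;n)$, and a factorization $\hat{P}(S;n)=2^{n-|S|-1}q_S(n)$ would pull out $2^{n-|T|-1}$. However, the step where you extract that power of $2$ is where the argument breaks. If you cut at the positions of $S$ together with the sentinels, you get $|S|+1$ blocks, and the total of your proposed per-block exponents $\sum(\ell_j-2)$ comes out to $n-|S|$ (if the sentinel positions are counted in the block lengths) or $n-|S|-2$ (if not) --- neither is $n-|S|-1$. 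The source of the discrepancy is that the blocks are not independent: each value $\pi_{i_j}$ at a cut point is shared by two adjacent blocks and must dominate both, so ``fix the values in each block, then multiply local counts'' over-counts or under-counts depending on the bookkeeping. Your acknowledgment of the coupling problem and the proposed remedy (``process peak positions in decreasing order of their values'') is not yet a counting argument, since those values are not given in advance but are part of what is being enumerated. You would need either to sum over the assignment of values to the cut positions, or to replace the block-product heuristic with a genuine bijective or recursive argument that produces exactly $2^{n-|S|-1}$ and exhibits $q_S$ as an integer-valued polynomial in $n$. Finally, the degree claim needs the assertion that $\deg q_S \le i_{\max S}-1$ with equality for $S=T$ and that the leading terms do not cancel in the alternating sum; you flag this but it is a real remaining obligation. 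In short: plausible scaffolding and a genuinely different route than BBS, but the central factorization of $\hat{P}(S;n)$ is currently unproved and the arithmetic of the exponent does not close.
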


In 2017,  Davis, Nelson, Petersen, and Tenner determined bounds for the number of permutations with a specified $n$-admissible pinnacle set $P\subset[n]$. That is, $P\subset[n]$ such that $\Pin(P)\neq\emptyset$. Their main result is as follows.

\begin{thm}[Davis, Nelson, Petersen, and Tenner 2017 \cite{pinn}] \label{pinpaper}
If $P$ is an admissible pinnacle set, then 
\[2^{n-|P|-1}\;\leq \;|\Pin(P;n)|\;\leq\; |P|! \cdot 2^{n-2|P|-1} \cdot \mathcal S(n-|P|, |P|+1)\] 
where $\mathcal S(r, s)$ denotes the Stirling partition number, which counts the number of ways to partition a set of $r$ objects into $s$ non-empty subsets. Moreover, these bounds are sharp. \end{thm}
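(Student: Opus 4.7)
My plan is to prove the two bounds separately via a structural decomposition of each $\pi\in\Pin(P;n)$, then verify sharpness on explicit pinnacle sets.

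\textbf{Structural decomposition.} I first establish that every $\pi\in\Pin(P;n)$ admits a unique decomposition
\[
\pi = A_0\,q_1\,A_1\,q_2\cdots q_s\,A_s,
\]
where $s=|P|$, the tuple $(q_1,\dots,q_s)$ is an ordering of $P$ read left to right in $\pi$, and each $A_i$ is a block of non-pinnacle entries. The convention $\pi_0=\pi_{n+1}=\infty$ forces $|A_0|,|A_s|\geq 1$ (else $q_1$ or $q_s$ fails the peak condition), and each interior block $A_i$ must be nonempty because two adjacent pinnacles would violate the peak condition on at least one of them. Because an internal local maximum of any $A_i$ would contribute a pinnacle outside $P$, each $A_i$ must be V-shaped: strictly decreasing down to its minimum, then strictly increasing. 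A V-shape on a $k$-element set has $2^{k-1}$ arrangements, since the minimum is pinned as the valley and each other element independently chooses a side. Thus once the block partition is fixed, the internal arrangements contribute $\prod_{i=0}^{s} 2^{|A_i|-1} = 2^{n-2|P|-1}$, independent of the block sizes.

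\textbf{Upper bound.} I would parametrize $\pi$ by three pieces of data: (i) an ordering of $P$, contributing $|P|!$; (ii) a partition of the $n-|P|$ non-pinnacle values among the $|P|+1$ nonempty blocks, accounted for by $\mathcal{S}(n-|P|,|P|+1)$ after a canonical labeling of blocks; and (iii) the V-shape arrangements inside each block, contributing $2^{n-2|P|-1}$. Multiplying yields the claimed upper bound. The inequality arises from dropping the validity constraint that each element of $A_i$ must be smaller than both flanking pinnacles.

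\textbf{Lower bound.} I would construct an injection $\{0,1\}^{n-|P|-1}\hookrightarrow\Pin(P;n)$. Starting from a canonical reference $\pi^\ast\in\Pin(P;n)$, I identify $n-|P|-1$ independent binary operations that each preserve the pinnacle set, for instance toggling whether a given non-pinnacle value sits to the left or the right of its governing valley within its V-shape block. Verifying independence of these toggles yields $2^{n-|P|-1}$ distinct elements of $\Pin(P;n)$.

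\textbf{Sharpness.} For the lower bound, taking $P=\{n\}$ yields $|\Pin(\{n\};n)|=2^{n-2}=2^{n-|P|-1}$ by a direct enumeration. For the upper bound, taking $P=\{n-|P|+1,\dots,n\}$, the $|P|$ largest values of $[n]$, makes every non-pinnacle automatically smaller than every pinnacle, so the validity constraint is vacuous and every choice in (i)--(iii) realizes a valid permutation, attaining the bound. The hardest step is the canonical labeling in (ii), which converts ordered block partitions into the clean Stirling count $\mathcal{S}(n-|P|,|P|+1)$ without over- or undercounting; the parallel delicate point on the lower-bound side is exhibiting $n-|P|-1$ toggles whose combined action is simultaneously independent and pinnacle-preserving.
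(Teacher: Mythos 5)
Your V-shape decomposition and the per-partition count $2^{n-2|P|-1}$ of internal arrangements are correct, but both bound arguments contain genuine gaps. For the lower bound: the toggle you describe (moving a non-pinnacle entry between the two arms of its V-block) is only available for entries that are neither pinnacles nor vales, and there are $n-2|P|-1$ of these, not $n-|P|-1$; the $|P|+1$ vales are pinned at the bottom of their V-blocks and cannot move. You are short exactly $|P|$ operations. The missing ones are the swaps \emph{at the pinnacles}: for each $p\in P$, interchange the entire descending arm and ascending arm around $p$, which is precisely the dual Foata-Strehl involution $\varphi_p$ of Section~\ref{sec:FSfacts}. These $n-|P|-1$ commuting involutions together produce the required orbit of size $2^{n-|P|-1}$. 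Your sharpness example for this bound is also wrong: $|\Pin(\{n\};n)| = 2^{n-2}(2^{n-2}-1)$, not $2^{n-2}$ (try $n=4$: there are twelve permutations with pinnacle set $\{4\}$), and $P=\{n\}$ actually \emph{maximizes} $|\Pin(P;n)|$ among singletons; a set realizing the lower bound is $P=\{3\}$, whose unique admissible vale set is $\{1,2\}$ and which has a single dual Foata-Strehl orbit.

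For the upper bound, the phrase ``after a canonical labeling of blocks'' is carrying all the weight and is not justified. The data you propose to record, namely the left-to-right pinnacle order, an \emph{unordered} partition of $[n]\setminus P$, and the V-shape choices, does not determine the permutation: with $P=\{5,6\}$ and $n=6$ every non-pinnacle value is below both pinnacles, so all $3!$ ways of distributing a given unordered $3$-block partition across the positions $A_0,A_1,A_2$ yield valid permutations with the same recorded data, and your map fails to be injective. The natural count from your decomposition is $|P|!\,(|P|+1)!\,\mathcal{S}(n-|P|,|P|+1)\,2^{n-2|P|-1}$, which is $(|P|+1)!$ times the stated bound. In fact $|\Pin(\{4\};4)|=12$ already exceeds $1!\cdot 2\cdot\mathcal{S}(3,2)=6$, so the bound as transcribed here cannot hold; it must be reconciled with the precise statement in Davis, Nelson, Petersen, and Tenner before any argument along these lines can close.
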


Davis et. al. posed the question of whether there exists a class of operations, which applied to a permutation in $\sS_n$, can produce any other permutation with the same pinnacle set and no others \cite[Question 4.2]{pinn}.
In this paper, we provide a way to generate all permutations with a given pinnacle set by using a group action on permutations called the dual Foata-Strel action, which we define in Section \ref{sec:FSfacts}. Specifically, this action partitions the set $\Pin(P;n)$ into disjoint orbits, and we generate one permutation in each orbit.  From this we then provide a closed non-recursive formula for the total number of permutations with a given pinnacle set. Thereby answering \cite[Question 4.4]{pinn}. To state our result, for
a given pinnacle set $P$, we define $\VV(P)$ to be the set of all vale sets, $V\subseteq([n]\setminus P)$, so that $P$ and $V$ are an $n$-admissible pinnacle and vale set combination, i.e. there are permutations in $\Sn$ with $P$ as their pinnacle set and $V$ as their vale set.

\begin{thm}\label{thm:main result}
If $P$ is an $n$-admissible pinnacle set, then 
\[\Pin(P;n)=2^{n-|P|-1}\sum_{V\in \VV(P)}\prod_{p \in P}\binom{\nPV(p)}2\prod_{x \in [n] \setminus (P \ \cup\ V)} \nPV(x),\]
where $V_k = \{v \in V : v < k\}$, $P_k = \{p \in P : p < k\}$, and $\nPV(k) = |V_k| - |P_k|$, counting the number of vales less
than $k$, minus the number of pinnacles less than $k$.
\end{thm}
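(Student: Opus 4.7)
The plan is to exhibit a group action---the dual Foata-Strehl action developed in Section~\ref{sec:FSfacts}---whose orbits on $\Pin(P;n)$ all have the same size. This action is generated by commuting involutions $\phi_x$ indexed by the non-extremal values $x \in [n] \setminus (\Pin(\pi) \cup \V(\pi))$; each $\phi_x$ toggles $x$ between a double-ascent and a double-descent position while preserving both $\Pin(\pi)$ and $\V(\pi)$. Because pinnacles and vales alternate under the sentinel convention $\pi_0 = \pi_{n+1} = \infty$, one always has $|\V(\pi)| = |\Pin(\pi)| + 1$, so every permutation in $\Pin(P;n)$ contains exactly $n - 2|P| - 1$ non-extremal values, and each orbit of the action restricted to $\{\pi \in \Pin(P;n) : \V(\pi) = V\}$ has size precisely $2^{n-2|P|-1}$.

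I would then select as the canonical representative of each orbit the unique permutation in which every non-extremal value occupies a double-descent position. Such a representative has the sawtooth shape
\[
R_0\,v_{i_1}\,p_{j_1}\,R_1\,v_{i_2}\,p_{j_2}\,\cdots\,R_{|P|}\,v_{i_{|P|+1}},
\]
where each $R_s$ is a (possibly empty) decreasing block of non-extremal values, $(v_{i_1},\ldots,v_{i_{|P|+1}})$ is an ordering of $V$, and $(p_{j_1},\ldots,p_{j_{|P|}})$ is an ordering of $P$. Partitioning $\Pin(P;n)$ by the vale set yields
\[
|\Pin(P;n)| = 2^{n-2|P|-1}\sum_{V \in \VV(P)} N(P,V),
\]
where $N(P,V)$ counts the canonical representatives with pinnacle set $P$ and vale set $V$.

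The main combinatorial step is to compute $N(P,V)$ by an incremental construction. Process the values $k = 1, 2, \ldots, n$ in increasing order while maintaining a collection of \emph{active segments}, each a partial sawtooth arrangement that will later become a substring of the final canonical representative. A direct induction shows that just before $k$ is processed, the number of active segments equals $\nPV(k) = |V_k| - |P_k|$: processing a vale creates a fresh singleton segment (a factor of $1$); processing a pinnacle $p$ merges an ordered pair of active segments with $p$ as the junction peak (a factor of $\nPV(p)(\nPV(p)-1)$ for the choice of left and right segment); and processing a non-extremum $x$ prepends $x$ to the left end of some active segment, which automatically places $x$ at a double descent in the final canonical form (a factor of $\nPV(x)$). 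Multiplying over all $k$ gives
\[
N(P,V) = \prod_{p \in P} \nPV(p)\bigl(\nPV(p)-1\bigr) \prod_{x \in [n] \setminus (P \cup V)} \nPV(x).
\]

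Substituting into the decomposition from the second paragraph and rewriting $\nPV(p)(\nPV(p)-1) = 2\binom{\nPV(p)}{2}$ absorbs $2^{|P|}$ into the prefactor, producing the claimed formula. I expect the main technical obstacle to be the inductive verification in the third paragraph: one must check that the three insertion operations give a bijection between canonical representatives and sequences of slot-choices (so that no canonical representative is counted twice or missed), and that the admissibility of $P$ together with the hypothesis $V \in \VV(P)$ guarantees $\nPV(k) \geq 2$ whenever a pinnacle is processed and $\nPV(k) \geq 1$ whenever a non-extremum is processed, so the construction is well defined. Establishing that the dual Foata-Strehl action preserves both $\Pin(\pi)$ and $\V(\pi)$---rather than only $\Pin(\pi)$, as in the classical version---is a secondary but essential prerequisite supplied by Section~\ref{sec:FSfacts}.
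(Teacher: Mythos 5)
Your proposal is correct and reaches the right formula, but it takes a genuinely different route from the paper. A small terminological caveat first: the action you actually use is not the paper's dual Foata--Strehl action $\varphi$ (which has nontrivial $\varphi_p$ at pinnacles), but the Br\"and\'en-type restriction $\varphi'$ mentioned in the remark at the end of Section~\ref{sec:FSfacts}, in which $\varphi_x$ is frozen on both vales \emph{and} pinnacles. You also write $\phi_x$ where the paper reserves $\phi_x$ for the original Foata--Strehl map. With that clarified, the two proofs diverge as follows. You use orbits of size $2^{n-2|P|-1}$ (the action only toggles the $n-2|P|-1$ non-extremal values between double ascent and double descent, and it does preserve both $\Pin$ and $\V$), and you take as the orbit representative the unique permutation in which every non-extremal value sits at a double descent. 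The paper instead uses the full $\varphi$-orbit of size $2^{n-|P|-1}$ and must therefore impose an additional normalization on the pinnacle arrangement --- the ``canonical $PV$-arrangement'' condition $\max(w_2)<\max(w_4)$ --- to single out one FS-minimal permutation per orbit; it then counts in two steps (Lemma~\ref{lemma:cannonical} for the canonical $PV$-arrangements, Theorem~\ref{thm:count} for the placements of the non-extremal values). Your single left-to-right scan with active segments bundles those two steps: a vale opens a segment, a pinnacle $p$ joins an ordered pair of segments (factor $\nPV(p)(\nPV(p)-1)$), and a non-extremum $x$ prepends to one of $\nPV(x)$ segments, giving $N(P,V)=\prod_{p\in P}\nPV(p)(\nPV(p)-1)\prod_{x}\nPV(x)$; the identity $\nPV(p)(\nPV(p)-1)=2\binom{\nPV(p)}{2}$ then absorbs $2^{|P|}$ into the prefactor, recovering the theorem. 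The trade-off is transparent: your representative is simpler to define but there are $2^{|P|}$ times more of them per $(P,V)$-class, exactly compensating the smaller orbits. The bijectivity of the incremental construction --- that at the moment $k$ is processed the active segments are exactly the maximal contiguous runs of values $<k$ in the final permutation, and that a double-descent placement of each non-extremum corresponds to a unique prepend choice --- is the step you correctly flag as the main thing to check, and it does go through; so does the claim $\nPV(k)\ge 2$ at pinnacles and $\nPV(k)\ge 1$ at non-extrema, which is part~(\ref{part:i}) of Lemma~\ref{lem:admissible} together with the observation that a non-extremum always has some vale below it and strictly fewer pinnacles below it than vales.
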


This work is organized as follows. In Section \ref{sec:FSfacts} 
we define the dual Foata-Strehl group action on permutations, recall some known characteristics of this action, and establish that the dual Foata-Strehl group action on permutations preserves pinnacle sets (Theorem~\ref{pinpreserve}). 
In Section \ref{sec:Rep} we describe a unique representative from each orbit under the dual Foata-Strehl action (Theorem~\ref{thm:uniqueFSminimal}). In Section \ref{sec:count} we construct and count permutations with a fixed pinnacle set, culminating in a proof of Theorem \ref{thm:main result}. In Section \ref{sec:code} we present computational evidence that the algorithm based on our constructions in Section \ref{sec:count} is drastically faster than the naive algorithm for generating $\Pin(P;n)$. In Section \ref{sec:future} we present a few open problems for further study.

\section{The dual Foata-Strehl group action on $\sS_n$}\label{sec:FSfacts}

Let $\pi \in \sS_n$ and $x\in [n]$. We can write $\pi = w_1 w_2 x w_4 w_5$ where $w_2$ is the longest contiguous subword immediately to the left of $x$ such that all values are less than $x$ and $w_4$ is the longest contiguous subword immediately to the right of $x$ such that all letters of $w_4$ are less than $x$. Call this the $x${\it-factorization of }$\pi$, then let $\varphi_x(\pi) = w_1w_4xw_2w_5$, which defines an involution on $\sS_n$. Note that if $x$ is a vale, then $w_2=\emptyset=w_4$, where $\emptyset$ denotes the empty word, and $\varphi_x(\pi)=\pi$.

The map $\varphi_x$ is a modified version of the map Foata and Strehl defined in \cite{FS}. In their paper, the $x$-factorization of $w$ was defined by letting $w_2$ be the longest contiguous subword immediately to the left of $x$ such that all values are greater than $x$ and $w_4$ is the longest contiguous subword immediately to the right of $x$ such that all letters of $w_4$ are greater than $x$. Then they use their $x$-factorization to define the map $\phi_x(\pi)=\phi_x(w_1w_2 x w_4w_5)=w_1w_4xw_3w_5$. 

In that sense, $\varphi_x$ and $\phi_x$ only differ in that one switches the values near $x$ that are less than $x$ and the other switches the values near $x$ that are greater than $x$. If we let $w_0$ be the longest word of $\mathfrak{S}_n$, namely $w_0=n\,(n-1)\, \cdots 1$, and if $\pi=\pi_1\pi_2\cdots \pi_n$ then $w_0(\pi_i)=n-\pi_i+1$ for all $1\leq i\leq n$. Hence, for any $x\in[n]$, we have that
\begin{align}\label{eq:FT to us}
    \varphi_x(\pi)&=w_0(\phi_{w_0(x)}(w_0\pi)).
\end{align}
Geometrically, this equation states that to obtain $\varphi_x(\pi)$ we can first flip $\pi$ vertically along the $y=n/2$ line, which is achieved by multiplying $\pi$ by $w_0$ on the left. Then, we  apply the map $\phi_{w_0(x)}$, and finally flip the permutation vertically again along the same line.

\begin{ex} If $\pi = 6534127$, then 

\begin{align*}
\varphi_4(\pi)&=\varphi_4(\underbrace{65}_{w_1}\underbrace{3}_{w_2}4\underbrace{12}_{w_4}\underbrace{7}_{w_5})=6512437\\
\varphi_5(\pi)&=\varphi_4(\underbrace{6}_{w_1}\underbrace{\emptyset}_{w_2}5\underbrace{3412}_{w_4}\underbrace{7}_{w_5})=6341257,
\end{align*}
and 
\begin{align*}
\phi_4(\pi)&=\varphi_4(\underbrace{653}_{w_1}\underbrace{\emptyset}_{w_2}4\underbrace{\emptyset}_{w_4}\underbrace{127}_{w_5})=6534127\\
\phi_5(\pi)&=\varphi_4(\underbrace{\emptyset}_{w_1}\underbrace{6}_{w_2}5\underbrace{\emptyset}_{w_4}\underbrace{34127}_{w_5})=5634127.
\end{align*}
Repeating this process shows that $\varphi_5(\varphi_4(\pi)) = 6124357=\varphi_4(\varphi_5(\pi))$ and $\phi_5(\phi_4(\pi))=5634127=\phi_4(\phi_5(\pi))$. 
\end{ex}

\begin{lem}\label{commute} If $x,y\in [n]$, then $\varphi_x(\varphi_y(\pi)) = \varphi_y(\varphi_x(\pi))$ and $\phi_x(\phi_y(\pi)) = \phi_y(\phi_x(\pi))$ for any $\pi \in \sS_n$. \end{lem}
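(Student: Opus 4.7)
The plan is to prove one of the two commutation statements and derive the other via the conjugation identity~\eqref{eq:FT to us}. Since $w_0$ is an involution both on $\sS_n$ and on $[n]$, a direct calculation from \eqref{eq:FT to us} yields
\[\varphi_x \varphi_y(\pi) = w_0\bigl(\phi_{w_0(x)} \phi_{w_0(y)}(w_0 \pi)\bigr),\]
and symmetrically for the opposite composition. Thus $\varphi_x\varphi_y=\varphi_y\varphi_x$ on $\sS_n$ if and only if $\phi_{w_0(x)}\phi_{w_0(y)}=\phi_{w_0(y)}\phi_{w_0(x)}$ on $\sS_n$. As $x,y$ range over $[n]$, so do $w_0(x),w_0(y)$, so the two commutation statements of the lemma are equivalent; it therefore suffices to prove one.

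To prove $\varphi_x \varphi_y = \varphi_y \varphi_x$, I would first dispose of the trivial case $x=y$ (both sides agree since $\varphi_x$ is an involution) and assume without loss of generality $x<y$. Consider the $y$-factorization $\pi = W_1 W_2\, y\, W_4 W_5$. The key structural observation I would establish is that, by maximality of $W_2$ and $W_4$, whenever $W_1$ is nonempty its final letter strictly exceeds $y$, and whenever $W_5$ is nonempty its initial letter strictly exceeds $y$. Because $x<y$, this forces the $x$-factorization blocks $u_2, u_4$ (maximal adjacent runs of values less than $x$) to lie entirely inside whichever of $W_1, W_2, W_4, W_5$ contains $x$: any attempt to extend $u_2$ or $u_4$ across a block boundary would require an adjacent letter strictly greater than $y>x$, which immediately terminates the run.

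Given this containment, the lemma follows by a short case analysis. If $x\in W_2$, write $W_2 = W_2^{L}\,u_2\,x\,u_4\,W_2^{R}$; both $\varphi_x\varphi_y(\pi)$ and $\varphi_y\varphi_x(\pi)$ then equal $W_1\,W_4\,y\,W_2^{L}\,u_4\,x\,u_2\,W_2^{R}\,W_5$, because $\varphi_y$ acts only by swapping the blocks $W_2$ and $W_4$ around $y$ and leaves the internal $x$-factorization of $W_2$ intact, while $\varphi_x$ touches only $u_2$ and $u_4$. The case $x\in W_4$ is symmetric. If instead $x\in W_1$ or $x\in W_5$, then $\varphi_x$ rearranges letters only inside that single outer block while $\varphi_y$ rearranges only the middle triple $W_2\,y\,W_4$; the two maps act on disjoint positions and commute trivially. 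The main obstacle is the structural containment claim: once one pins down that the maximality built into the $y$-factorization forbids $x$ from sitting at the positions in $W_1$ or $W_5$ adjacent to $W_2\,y\,W_4$, the rest is a routine bookkeeping check.
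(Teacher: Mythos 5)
Your proposal is correct, but it takes a genuinely different route than the paper. The paper simply cites Foata and Strehl (Section~2 of \cite{FS}) for the commutation of the $\phi_x$'s and then derives the $\varphi_x$-commutation by pushing the claim through the conjugation identity~\eqref{eq:FT to us} term by term. You instead observe, as a two-sided equivalence, that the conjugation by $w_0$ reduces each of the two commutation statements to the other (using that $w_0$ is an involution on both $\sS_n$ and $[n]$), and then supply a self-contained combinatorial proof of $\varphi_x\varphi_y=\varphi_y\varphi_x$ by case analysis on where $x$ sits in the $y$-factorization $W_1W_2\,y\,W_4W_5$, with the key point being the containment of the $x$-blocks $u_2,u_4$ inside a single $W_i$ forced by the boundary letters of $W_2$ and $W_4$ exceeding $y$. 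Your structural claim is correct: the last letter of $W_1$ (resp.\ first of $W_5$), when present, exceeds $y>x$, and $y$ itself exceeds $x$, so none of the four blocks' boundaries can be crossed by $u_2$ or $u_4$ when $x<y$, and the two actions rearrange disjoint portions of the word (or act in a visibly order-independent way when $x\in W_2$ or $W_4$). One small point worth spelling out in a full writeup: after applying $\varphi_x$ first, one should note that the $y$-factorization of $\varphi_x(\pi)$ still has the same outer blocks $W_1,W_4,W_5$ and the same position for $y$, which follows because $\varphi_x$ only permutes values inside one block and preserves the block's value set. The tradeoff is that the paper's proof is short but outsources the combinatorial content to a reference, whereas yours is longer but self-contained and, as a bonus, yields both halves of the lemma from a single case analysis.
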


\begin{proof}
In Section 2 of \cite{FS} Foata and Strehl prove that for any $x,y\in[n]$ and any permutation $\pi$ we have that $\phi_x(\phi_y(\pi))=\phi_y(\phi_x(\pi))$. Thus,
\begin{center}
\begin{tabular}{rll}
    $\varphi_x(\varphi_y(\pi))$&$=\varphi_x\big(w_0(\phi_{w_0(y)}(w_0\pi))\big)$&by \eqref{eq:FT to us} applied to $\varphi_{y}$\\
    &$=w_0\Big(\phi_{w_0(x)}\Big(w_0\big(w_0\phi_{w_0(y)}(w_0\pi)\big)\Big)  \Big)$&by \eqref{eq:FT to us} applied to $\varphi_{x}$\\
    &$=w_0\Big(\phi_{w_0(x)}\Big(\phi_{w_0(y)}(w_0\pi)\Big)  \Big)$&as $w_0$ is an idempotent\\
    &$=w_0\Big(\phi_{w_0(y)}\Big(\phi_{w_0(x)}(w_0\pi)\Big)  \Big)$&since $\phi_{w_0(x)}$ and $\phi_{w_0(y)}$ commute\\
    &$=w_0\Big(\phi_{w_0(y)}\Big(w_0\big(w_0\phi_{w_0(x)}(w_0\pi)\big)\Big)$&as $w_0$ is an idempotent\\
    &$=\varphi_y\big(w_0(\phi_{w_0(x)}(w_0\pi))\big)$&by \eqref{eq:FT to us} applied to $\varphi_{y}$\\
    &$=\varphi_y(\varphi_x(\pi))$& by \eqref{eq:FT to us} applied to $\varphi_{x}$.
\end{tabular}
\end{center}
\end{proof}

Given $S\subseteq[n]$, Foata and Strehl \cite{FS} define 
$$\phi_S(\pi) = \prod_{x\in S} \phi_x(\pi)$$ where the product notation denotes the composition of the functions $\phi$ for all $x\in S$, and if $S=\emptyset$, then $\phi_S$ is the identity map on $\mathfrak{S}_n$. Since $\phi_x$ and $\phi_y$ commute for all $x,y\in [n]$, then $\phi_S(\pi)$ is well defined. This can be interpreted as a group action $\phi:\Z_2^n\times \sS_n\to\sS_n$ defined by
$\phi(\textbf{a},\pi)=\phi_{X_{\textbf{a}}}(\pi)$ where $X_\textbf{a}:=\{i:a_i=1\}$. We call $\phi_S$ the Foata-Strehl action.

Given $S\subseteq[n]$, we can similarly define 
$$\varphi_S(\pi) = \prod_{x\in S} \varphi_x(\pi)$$
where the product notation denotes the composition of the functions $\varphi_x$ for all $x\in S$. When $S=\emptyset$, define $\varphi_S$ to be the identity map on $\sS_n$. Since $\varphi_x$ and $\varphi_y$ commute for all $x,y\in[n]$, then $\varphi_S$ is well defined. Similarly, the group $\Z_2^n$ acts on the symmetric group $\sS_n$ via the function $\varphi_S$.  
To be precise, $\varphi:\Z_2^n\times \sS_n\to\sS_n$ defined by
$\varphi(\textbf{a},\pi)=\varphi_{X_{\textbf{a}}}(\pi)$ where $X_\textbf{a}:=\{i:a_i=1\}$ is a group action. We henceforth refer to  $\varphi_S$ as the dual Foata-Strehl action.

Our first result establishes that the dual Foata-Strehl action preserves the pinnacle set of a permutation. 

\begin{thm}\label{pinpreserve} For any $S\subset [n]$ and any $\pi \in \sS_n$, $\rm{Pin}(\pi) = \rm{Pin}(\varphi_S(\pi))$. \end{thm}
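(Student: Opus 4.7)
The plan is to reduce the claim to the case $S=\{x\}$ and then run a small case analysis on the $x$-factorization of $\pi$. Since $\varphi_S=\prod_{x\in S}\varphi_x$ is a composition of the commuting involutions $\varphi_x$, it suffices to prove that $\Pin(\pi)=\Pin(\varphi_x(\pi))$ for every fixed $x\in[n]$. Fix such an $x$ and write $\pi=w_1w_2 x w_4 w_5$ so that $\varphi_x(\pi)=w_1w_4 x w_2 w_5$. The maximality in the definition of the $x$-factorization gives the two boundary facts that will do all the work: every letter of $w_2$ and $w_4$ is strictly less than $x$, while the last letter of $w_1$ (if any) and the first letter of $w_5$ (if any) are strictly greater than $x$.

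First I would check that $x$ itself has the same pinnacle status in the two permutations: $x$ is a pinnacle iff both of its immediate neighbors are less than $x$, which is equivalent to both $w_2$ and $w_4$ being nonempty, and this condition is symmetric under swapping $w_2$ and $w_4$. Then I would handle any other value $v\neq x$ by locating it in whichever of $w_1,w_2,w_4,w_5$ contains it. If $v$ lies strictly inside one of these blocks, both neighbors of $v$ are unchanged by $\varphi_x$ and there is nothing to do. The remaining cases are the six boundary positions (last letter of $w_1$; first and last letters of $w_2$ and of $w_4$; first letter of $w_5$), together with the degenerate subcases where a block is empty or consists of a single letter.

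In each boundary case, exactly one neighbor of $v$ may change, but the old neighbor and the new neighbor sit on the same side of $v$ in the comparison: if $v$ is the last letter of $w_1$ then $v>x$ and both the old right neighbor (first letter of $w_2$ or $x$) and the new right neighbor (first letter of $w_4$ or $x$) are $\leq x<v$, while the left neighbor of $v$ is unchanged; the first letter of $w_5$ is symmetric. If $v$ is a boundary letter of $w_2$ (or $w_4$), the corresponding outside neighbor of $v$ is either the last letter of $w_1$ or $x$ (in $\pi$) versus $x$ or the first letter of $w_5$ (in $\varphi_x(\pi)$), and both are strictly greater than $x>v$, so again the inequality with $v$ is preserved; the other neighbor of $v$ is internal to $w_2$ (or equals $x$ when $|w_2|=1$, in which case $v$ is a vale in both permutations). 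Consequently $v\in\Pin(\pi)\Longleftrightarrow v\in\Pin(\varphi_x(\pi))$ in every case. The main obstacle is not conceptual depth but bookkeeping, namely organizing the boundary and degenerate subcases so that the argument reads cleanly rather than as a long enumeration.
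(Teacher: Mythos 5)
Your proof is correct and follows essentially the same route as the paper's: reduce to a single $\varphi_x$ and check, position by position at the boundaries of the $x$-factorization, that each neighbor comparison is preserved. The paper streamlines the bookkeeping by exploiting that $\varphi_x$ is an involution (so only the inclusion $\Pin(\pi)\subseteq\Pin(\varphi_x(\pi))$ needs checking) and by observing that a pinnacle of $\pi$ can never sit at an end of $w_2$ or $w_4$, but the underlying inequalities are exactly the ones you use; the only small slip is that the outside neighbor at a $w_2$/$w_4$ boundary can be $x$ itself rather than strictly greater than $x$, though it is still strictly greater than $v$, so your conclusion stands.
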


\begin{proof}
First, note that it is enough to show that for any $x\in[n]$, $\Pin(\pi) = \Pin(\varphi_x(\pi))$. We write 
\begin{eqnarray} 
\pi &=& w_1 w_2 x w_4 w_5 \label{eq1}\\ 
\varphi_x(\pi) &=& w_1 w_4 x w_2 w_5. \label{eq2}
\end{eqnarray}

Since  $\varphi_x$ is an involution, it is enough to prove that $\Pin(\pi) \subset \Pin(\varphi_x(\pi))$. Let $y \in \Pin(\pi)$. We show $y\in \Pin(\varphi_x(\pi))$. First, consider the case that $y=x$. It is clear that $y$ will still be a pinnacle of $\varphi_x$, as the subwords $w_2$ and $w_4$ are defined to be strictly smaller than $y$, and swapping the two words around $y$ will preserve the fact that $y$ is still a~pinnacle. 

Now, we consider the cases such that the pinnacle $y$ is contained in subwords $w_1$, $w_2$, $w_4$, or $w_5$. Note that it is enough to consider the cases when $y$ is on one of the ends of the words that comprise the factorization. Indeed, since the action preserves the structure of the subwords themselves, changes to the pinnacle set will only arise at the junctions between the subwords.

We first consider the case that $y \in w_1$. Since the left-most letter of $w_1$ cannot be a pinnacle by definition, we consider the case when $y$ is the right-most letter of $w_1$. If $y$ is a pinnacle of $\pi$, it must be greater than its neighbor to the left in $w_1$, which remains the same in $\varphi_x(\pi)$. Since $y\in w_1$, it must be the case that $y > x$, by definition of the $x$-factorization. Moreover, all letters of $w_4$ will be less than $x$, which is also less than $y$. Furthermore, if $w_4$ is empty, then the neighbor to the right of $y$ in $\varphi_x(\pi)$ is $x$ itself. So, $y$ is always greater than its neighbor to the right in $\varphi_x(\pi)$ and is thus a pinnacle. This argument similarly applies to the case that $y$ is the left-most letter of $w_5$. 

We claim that it is impossible to have a pinnacle on the ends of $w_2$ and $w_4$. We consider the case of $w_2$ and note that an analogous  argument applies to $w_4$. Suppose $y$ is a pinnacle sitting at the right-most end of $w_2$. Because it is a pinnacle, $y$ must be greater than its neighbor to the right, namely $x$. However, by definition of the $x$-factorization, the letter $y$ would not be in $w_2$, as $w_2$ is the longest contiguous word to the left of $x$ whose letters are all \emph{less} than $x$. Now suppose $y$ is a pinnacle sitting at the left-most end of $w_2$. By definition of pinnacle, $y$ must be greater than its left neighbor, which is in $w_1$. On the other hand, by definition of the $x$-factorization, all letters of $w_2$ -- and thus $y$ -- are less than $x$, and the neighbor to the left of $y$ in $w_1$ must be greater than $x$ and thus greater than $y$. We have arrived at a contradiction and conclude that the left-most end of $w_2$ cannot be a~pinnacle. 

Thus, we have shown that $\Pin(\pi) \subset \Pin(\varphi_x(\pi))$, which implies that $\Pin(\pi) = \Pin(\varphi_x(\pi))$ for arbitrary $x \in [n]$. Thus, we can conclude that the dual Foata-Strehl action preserves pinnacle sets. 
\end{proof}

% \section{An equivalence relation}\label{sec:relation}
Let $\thicksim$ be the equivalence relation on $\mathfrak{S}_n$ defined by the action of $\mathbb{Z}_2^n$. Namely, $\pi \thicksim \tau$ if and only if there exists $\textbf{a}\in \Z_2^n$ such that $\varphi_{X_{\textbf{a}}}(\pi) = \tau$. 
The equivalence classes under this relation are precisely the orbits of the dual Foata-Strehl action.
In light of Theorem \ref{pinpreserve}, we know that these orbits partition $\sS_n$ into subsets of permutations sharing a pinnacle~set. 

The following example illustrates the there may be multiple equivalence classes with the same pinnacle~set.

\begin{ex}
In Table \ref{tab:1}, each row represents an equivalence class of $\sS_4$ arising from the dual Foata-Strehl action, and we have labeled the pinnacle set of each class at the left of the row. Note that there are three equivalence classes with the same pinnacle set $P=\{4\}$.
\begin{table}[h]
\begin{tabular}{|c||cccccccc|}\hline
Pinnacle set&\multicolumn{8}{c|}{Equivalence class}\\\hline\hline
$P=\emptyset$&1234&2134&3124&4123&3214&4213&4312&4321\\\hline
$P=\{3\}$&1324&2314&4132&4231&&&&\\\hline
$P=\{4\}$&1243&2143&3412&3421&&&&\\\hline
$P=\{4\}$&1342&3142&2413&2431&&&&\\\hline
$P=\{4\}$&1423&1432&2341&3241&&&&\\\hline
\end{tabular}
\caption{Partitioning of $\sS_4$ by the dual Foata-Strehl action.}\label{tab:1}
\end{table}
\end{ex}

Next we measure the size of each equivalence class and do so by examining the relationship between pinnacles and vales of permutations. 

In what follows we let $v(\pi)$ denote the number of vales in $\pi$.

\begin{lem}\label{numval} If $P$ is an $n$-admissible pinnacle, then $v(\pi)=|P| + 1$ for all $\pi\in \Pin(P;n)$. \end{lem}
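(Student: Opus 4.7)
The plan is to exploit the convention $\pi_0 = \pi_{n+1} = \infty$ to show that peaks and valleys must strictly alternate as one reads the extended word $\pi_0 \pi_1 \cdots \pi_{n+1}$ from left to right, and that this forced alternation begins and ends with a valley. Since pinnacles are exactly the values at peak positions and vales are exactly the values at valley positions, the identity $v(\pi) = |\Pin(\pi)|+1 = |P|+1$ will then be immediate. Notably, admissibility of $P$ is not essential to the counting itself; it is only used to guarantee that $\Pin(P;n)$ is nonempty so that the statement is not vacuous.

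First I would decompose the extended word $\pi_0 \pi_1 \cdots \pi_{n+1}$ into its maximal monotone runs (maximal ascending or descending subwords). Because $\pi_0 = \infty > \pi_1$, the first maximal run is descending; because $\pi_n < \infty = \pi_{n+1}$, the last maximal run is ascending. In particular, the number of runs is even, and their types alternate between descending and ascending. Between two consecutive runs there is a transition index $i$: a descending-to-ascending transition satisfies $\pi_{i-1} > \pi_i < \pi_{i+1}$ (so $i \in \Valley(\pi)$), while an ascending-to-descending transition satisfies $\pi_{i-1} < \pi_i > \pi_{i+1}$ (so $i \in \Pk(\pi)$).

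Next I would observe that since the sequence of runs starts with a descending run and ends with an ascending run, the sequence of transitions starts with a valley and ends with a valley, and the transitions alternate between valleys and peaks. Writing $k$ for the total number of valleys, there are exactly $k-1$ peaks interlaced with them, giving $|\Valley(\pi)| = |\Pk(\pi)| + 1$. Passing from positions to values via $\pi$, this says $|\V(\pi)| = |\Pin(\pi)| + 1$, and since $\pi \in \Pin(P;n)$ means $|\Pin(\pi)| = |P|$, we conclude $v(\pi) = |P| + 1$.

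There is no real obstacle here beyond carefully justifying the alternation: the mild subtlety is that run-lengths are allowed to be $1$, so one must verify that the transition indices between consecutive runs are genuinely interior positions in $[n]$ and that each truly corresponds to a valley or peak as defined (including the correct handling of a length-one run sandwiched between two transitions). Once that bookkeeping is done, the alternation and the parity of the endpoints force the count, and the lemma follows.
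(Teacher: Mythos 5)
Your proposal is correct and follows the same approach as the paper: using the convention $\pi_0=\pi_{n+1}=\infty$ to force valleys and peaks to alternate with a valley at each end, hence one more vale than pinnacle. You have simply fleshed out the alternation argument (via maximal monotone runs) that the paper states in one sentence.
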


\begin{proof}
Since $\pi_0=\pi_{n+1}=\infty$, and since vales and pinnacles alternate we know there will be one more vale than pinnacles.
\end{proof}

For any $\pi \in \sS_n$, let $\Orb_\varphi(\pi):=\{\vp_S(\pi):S\subseteq[n]\}$ denote the orbit of $\pi$ under the dual Foata-Strehl action $\varphi$. Similarly, let $\Orb_\phi(\pi):=\{\phi_S(\pi):S\subseteq[n]\}$ denote the orbit of $\pi$ under the Foata-Strehl action $\phi$. In \cite[Section 3]{FS}, Foata-Strehl proved that
$$|\text{Orb}_\phi(\pi)|=2^{n-v(\pi)}.$$
We now prove the analogous result for $\Orb_\varphi(\pi)$. 

\begin{thm}\label{thm:orbitsize}
If $\pi \in \sS_n$, then $|\Orb_\varphi(\pi)|=2^{n-v(\pi)}$. 
\end{thm}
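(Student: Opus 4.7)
My plan is to compute the orbit size by first determining the stabilizer of $\pi$ under the dual Foata--Strehl action and then showing that the induced action on the complementary set of generators is free at $\pi$. From the $x$-factorization, $\varphi_x(\pi)=\pi$ if and only if $w_2=w_4=\emptyset$, which under the convention $\pi_0=\pi_{n+1}=\infty$ is equivalent to $x\in\V(\pi)$. Next, I would prove a vale-analog of Theorem~\ref{pinpreserve}: $\V(\varphi_x(\pi))=\V(\pi)$ for every $x\in[n]$, by the same style of case analysis (for example, no vale of $\pi$ can sit at the right end of $w_1$, since such a value would exceed $x$ yet have to be smaller than the leftmost element of $w_2$, a contradiction, and the remaining boundary and interior cases preserve the defining inequalities after the swap). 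Combining these two facts, $\varphi_x$ acts trivially on every element of $\Orb_\varphi(\pi)$ exactly when $x\in\V(\pi)$, so the $\Z_2^n$-action factors through the quotient indexed by $[n]\setminus\V(\pi)$, yielding $|\Orb_\varphi(\pi)|\le 2^{n-v(\pi)}$.

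For the reverse inequality, I show the factored action is free at $\pi$: if $\emptyset\ne S\subseteq[n]\setminus\V(\pi)$, then $\varphi_S(\pi)\ne\pi$. Set $x_0=\max S$ and $T=S\setminus\{x_0\}$. Since each $\varphi_y$ with $y<x_0$ only permutes values strictly less than $y$, and the corresponding ``$y$-block'' of positions (contiguous positions around $y$'s position with values at most $y<x_0$) cannot contain $x_0$'s position, it lies entirely on one side of it. Iterating, $\varphi_T$ preserves, as sets, the values immediately to the left of $x_0$'s position (namely $\{w_2^{(x_0)}\}$) and those immediately to the right (namely $\{w_4^{(x_0)}\}$). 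By contrast, $\varphi_{x_0}(\pi)$ swaps $w_2^{(x_0)}$ and $w_4^{(x_0)}$ around $x_0$. Since $x_0\notin\V(\pi)$, at least one of $w_2^{(x_0)},w_4^{(x_0)}$ is nonempty; being disjoint as sets of values, they are not equal as sets, so $\varphi_T(\pi)\ne\varphi_{x_0}(\pi)$. Applying $\varphi_{x_0}$ to both sides then gives $\varphi_S(\pi)=\varphi_{x_0}(\varphi_T(\pi))\ne\varphi_{x_0}(\varphi_{x_0}(\pi))=\pi$, as desired.

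The main obstacle is the value-set preservation used in this freeness step: I must verify that no $\varphi_y$ with $y<x_0$ can relocate values across $x_0$'s position, and that this property persists under iterated application of all $\varphi_y$ for $y\in T$. Once this is established, the remaining arithmetic is immediate.
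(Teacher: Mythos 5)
Your argument is correct in outline but takes a genuinely different route from the paper. The paper's proof is a two-line reduction: using the conjugation $\varphi_x(\pi) = w_0\bigl(\phi_{w_0(x)}(w_0\pi)\bigr)$ from Section~\ref{sec:FSfacts}, it writes down the bijection $\varphi_S(\pi) \mapsto \phi_{w_0(S)}(w_0\pi)$ between $\Orb_\varphi(\pi)$ and $\Orb_\phi(w_0\pi)$ and then cites Foata and Strehl's orbit-size formula for $\phi$. You instead compute the stabilizer of $\pi$ directly and invoke orbit--stabilizer. This is more work but is self-contained and gives a structural explanation for the power of $2$ that does not rely on the Foata--Strehl result as a black box; your $\max S$ trick in the freeness step is a nice way to see that the non-vale letters act freely.

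Two minor refinements. (i) The vale-preservation lemma you propose as a companion to Theorem~\ref{pinpreserve} is unnecessary: you do not need the action to descend to a well-defined quotient action on the whole orbit. Orbit--stabilizer only requires knowing that the stabilizer of $\pi$ in $\Z_2^n$ is exactly $\{S : S \subseteq \V(\pi)\}$; the containment $\supseteq$ is immediate since $\varphi_v(\pi)=\pi$ for each $v \in \V(\pi)$, and $\subseteq$ follows from your freeness claim applied to $S \setminus \V(\pi)$. (ii) In the freeness step, observing only that the $y$-block ``lies entirely on one side of $x_0$'s position'' is slightly too weak: it does not by itself rule out the $y$-block straddling the boundary between $w_1^{(x_0)}$ and $w_2^{(x_0)}$, which would change the value set of $w_2^{(x_0)}$. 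The fix is short --- the position immediately to the left of $w_2^{(x_0)}$ carries a value $> x_0 > y$ (or is $\pi_0=\infty$), so the contiguous $y$-block, all of whose values are at most $y$, cannot reach across it, and symmetrically on the right --- but it should be stated explicitly, since it is exactly the invariance you flagged as ``the main obstacle.''
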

\begin{proof}
For a set $S\subseteq [n]$, let $w_0(S)=\{w_0(s)\, : \, s\in S\}$, where $w_0(s)=n-s+1$. We now create a bijection between $\text{Orb}_\varphi(\pi)$ and $\text{Orb}_\phi(w_0\pi)$. Let $$F:\text{Orb}_\varphi(\pi) \to \text{Orb}_\phi(w_0\pi) \text{ such that } F(\varphi_S(\pi))=\phi_{w_0(S)}(w_0\pi)$$
and 
$$G:\text{Orb}_\phi(w_0\pi) \to \text{Orb}_\varphi(\pi) \text{ such that } G(\phi_S(w_0\pi))=\varphi_{w_0(S)}(\pi).$$
Then $F\circ G$ and $G\circ F$ are the identity maps on $\text{Orb}_\phi(w_0\pi)$ and $\text{Orb}_\varphi(\pi)$, respectively. Thus, 
\[|\text{Orb}_\varphi(\pi)|=|\text{Orb}_\phi(w_0\pi)|=2^{n-v(\pi)}.\qedhere\]
\end{proof}

We remark that Foata and Strehl determined that the number of orbits under $\phi$  is given by the $n$-th tangent or secant number, depending on whether $n$ is odd or even. By Theorem \ref{thm:orbitsize}, the same is true for the number of orbits under $\varphi$. We would now like to count the number of orbits of $\varphi$ that have a prescribed pinnacle set $P$. This is the content of the subsequent sections. 

\begin{remark}
 In \cite{valleyhop}, Petter Br\"and\'en defined a modified function,  that we call $\varphi'_x$, such that $\varphi_x'(\pi)=\varphi_x(\pi)$ if $x$ is neither a pinnacle nor a vale, and $\varphi'_x(\pi)=\pi$ if $x$ is a pinnacle or a vale. Similar to $\phi$ and $\varphi$, the author defines $\varphi'$ as an action of $\Z_2^n$ on $\sS_n$ and uses it to prove that for any $T\subseteq \sS_n$, the polynomial defined by
$$A(T;x)=\sum_{\pi \in T}x^{\text{des}(\pi)}$$
is $\gamma$-nonnegative, where $\text{des}(\pi)=|\{i \in [n]\,|\,\pi_i>\pi_{i+1}\}|.$
In \cite{gal},  Postnikov, Reiner and Williams defined a modified function,  that we call $\varphi''$, such that $\varphi_x''(\pi)=\phi_x(\pi)$ if $x$ is neither a pinnacle nor a vale and $\varphi''(\pi)=\pi$ if $x$ is a pinnacle or a vale. Similar to $\phi,\varphi,$ and $\varphi'$, they define an action $\varphi''$ of $\Z_2^n$ on $\sS_n$ and use it to prove Gal's conjecture for the chordal nestohedra, \cite[Theorem 11.6]{gal}.
\end{remark}

\section{Representatives of dual Foata-Strehl orbits} \label{sec:Rep}
In this section, we describe a collection of permutations, called \textit{$FS$-minimal permutations}, that characterize the orbits of the dual Foata-Strehl action $\varphi$. Then in Section \ref{sec:count}, we provide a construction of all $FS$-minimal permutations with a given pinnacle set. These results will allow us to count all permutations with a given pinnacle set.

\begin{definition}[Admissibility] \label{def:adms}
A pair of sets $(P,V)$ is considered \textbf{admissible} if there is a permutation with pinnacle set $P$ and vale set $V$. Given a pinnacle set $P$, define $\VV(P)$ to be the set of all vale sets $V$ for which the pair $(P,V)$ is admissible.
\end{definition}

Throughout the section, let $\pi$ be a permutation with pinnacle set $P=\{ p_1,\ldots, p_\ell \}$ and vale set $V=\{ v_1,\ldots, v_{\ell+1} \}$, respectively.
We will often list the pinnacles and vales in the order in which they appear in $\pi$, from left to right. 
We will also restrict $\pi$ to permutations of the sets $P$, $V$, and  $P \cup V \subseteq [n]$.  For instance, we write 
$\pi|_{P} = p_1p_2\cdots p_\ell$ to denote the restriction of the permutation $\pi$ to just the values at which $\pi$ has pinnacles, which we list in the order they appear in $\pi$. Similarly, $\pi|_{V} =v_1v_2\cdots v_{\ell+1}$  denotes the restriction of the permutation $\pi$ to just the values at which $\pi$ has vales, which we list in the order they appear in $\pi$. Similarly, we let
$$\pi|_{P\cup V}=v_1p_1v_2p_2\cdots p_\ell v_{\ell+1}$$ denote the restriction of $\pi$ to just the values at which $\pi$ has vales and pinnacles, listed in the order they appear in $\pi$.
For example, if $\pi=32814756$, then $P=\{7,8\}$, $V=\{1,2,5\}$, $\pi|_{P}=87$, $\pi|_{V}=215$, and $\pi|_{P\cup V}=28175$.

In what follows, we present three technical lemmas used to prove the main theorem of the section, Theorem \ref{thm:uniqueFSminimal}.

 \begin{lemma}\label{lem:FSdescents}
    If $\pi$ is a permutation with pinnacle set $P=\{p_1,\ldots, p_{\ell}\}$, then for all $i \in [\ell]$, $\pi$ and $\varphi_{p_i}(\pi)$ have the same number of descents.
    \end{lemma}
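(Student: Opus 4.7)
The plan is to track descents through the $p_i$-factorization $\pi = w_1 w_2\, p_i\, w_4 w_5$ and compare them with the descents of $\varphi_{p_i}(\pi) = w_1 w_4\, p_i\, w_2 w_5$. Since $\varphi_{p_i}$ merely permutes the four blocks $w_1, w_2, p_i, w_4, w_5$ (keeping each block's internal letter order), any descent that lies strictly inside one of $w_1, w_2, w_4, w_5$ contributes equally to $\pi$ and $\varphi_{p_i}(\pi)$. Thus I only need to verify that the number of descents created at the inter-block junctions is the same in both words.

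The preparatory step is to record the key inequalities given by the $p_i$-factorization. Every letter of $w_2$ and of $w_4$ is less than $p_i$ by definition. Because $p_i \in P$ is a pinnacle, its immediate left and right neighbors in $\pi$ are smaller than $p_i$, so \emph{both} $w_2$ and $w_4$ are nonempty. Moreover, by the maximality of $w_2$ and $w_4$, if $w_1$ is nonempty then its last letter exceeds $p_i$, and if $w_5$ is nonempty then its first letter exceeds $p_i$.

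Now I compare the junctions. In $\pi$ they are $w_1|w_2$, $w_2|p_i$, $p_i|w_4$, $w_4|w_5$; in $\varphi_{p_i}(\pi)$ they are $w_1|w_4$, $w_4|p_i$, $p_i|w_2$, $w_2|w_5$. Using the inequalities above, each of the first and third junctions is a descent in both words (a value $>p_i$ followed by a value $<p_i$), while each of the second and fourth junctions is an ascent in both words (a value $<p_i$ followed by a value $\geq p_i$; the fourth junction only exists when $w_5$ is nonempty, in which case its first letter $>p_i$). So the junction descent patterns match perfectly, yielding the same junction descent counts.

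Finally, I would dispatch the edge cases: if $w_1$ is empty, the first junction disappears uniformly from both permutations, and if $w_5$ is empty, the fourth junction disappears uniformly from both permutations, so in every case the total descent counts agree. The argument is essentially a bookkeeping exercise; I expect no serious obstacle beyond carefully enumerating the (at most four) junctions and the corner cases in which $w_1$ or $w_5$ is empty.
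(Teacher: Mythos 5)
Your proof is correct and follows essentially the same route as the paper: decompose via the $p_i$-factorization, observe that internal descents within $w_1,w_2,w_4,w_5$ are preserved, and verify that the four block junctions carry the same descent/ascent pattern before and after applying $\varphi_{p_i}$. The paper carries out this junction bookkeeping with explicit indices $\pi_{k_1},\pi_{k_2},\pi_{k_4},\pi_{k_5}$, but the underlying argument is identical to yours.
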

    \begin{proof}
    For any $i \in [\ell]$, consider the $p_i$-factorization of $\pi$, $$\pi=w_1w_2p_iw_4w_5=\underbrace{\pi_1\cdots \pi_{k_1}}_{w_1}\underbrace{\pi_{k_1+1}\cdots \pi_{k_2}}_{w_2}p_i \underbrace{\pi_{k_4}\cdots \pi_{k_5-1}}_{w_4}\underbrace{\pi_{k_5}\cdots \pi_n}_{w_5}.$$
    By the definition of this factorization $p_i>\max(w_2)$, $p_i>\max(w_4)$ and $\pi_{k_1}>p_i<\pi_{k_5}$. Applying $\varphi_{p_i}$  we get  
    $$\varphi_{p_i}(\pi)=w_1w_4p_iw_2w_5=\underbrace{\pi_1\cdots \pi_{k_1}}_{w_1}\underbrace{\pi_{k_4}\cdots \pi_{k_5-1}}_{w_4}p_i\underbrace{\pi_{k_1+1}\cdots \pi_{k_2}}_{w_2}\underbrace{\pi_{k_5}\cdots \pi_n}_{w_5}.$$
     Since the content in $w_1,w_2,w_4,w_5$ did not change, it is enough to study the places where these subwords meet in $\varphi_{p_i}(\pi)$, namely $\pi_{k_1}\pi_{k_4}, \pi_{k_5-1}p_i, p_i\pi_{k_1+1}$, and $\pi_{k_2}\pi_{k_5}$.  Since $\pi_{k_4}<p_i<\pi_{k_1}$ and $p_i>\pi_{k_1+1}$, the descents $\pi_{k_1}\pi_{k_1+1}$ and $p_i\pi_{k_4}$ in $\pi$ got replaced by the descents $\pi_{k_1}\pi_{k_4}$ and $p_i\pi_{k_1+1}$ in $\varphi_{p_i}(\pi)$, respectively. Similarly, the ascents $\pi_{k_2}p_i$ and $\pi_{k_5-1}\pi_{k_5}$ in $\pi$ got replaced by the ascents $\pi_{k_5-1}p_i$ and $\pi_{k_2}\pi_{k_5}$ in $\varphi_{p_i}(\pi)$, respectively. Thus, the number of descents remained constant.
    \end{proof}

\begin{lemma} \label{lemma:xy}
Let $\pi\in\sS_n$ and let $x,y$ be two distinct elements in $[n]$.  If \begin{align*}
    &v_1\, v_2\, x \,v_4\,v_5\quad\mbox{ is the $x$-factorization of $\pi$ and}\\
    &\alpha_1 \alpha_2  x \alpha_4 \alpha_5\quad\mbox{is the $x$-factorization of $\varphi_{y}(\pi)$,}
\end{align*} 
then  
$\max(v_2)=\max(\alpha_2) \text{ and } \max(v_4)=\max(\alpha_4).$
\end{lemma}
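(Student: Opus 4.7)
The plan is to run a case analysis on the position of $y$ in $\pi$, relative to the $x$-factorization $\pi = v_1 v_2 x v_4 v_5$, and on whether $y < x$ or $y > x$. In each case we write out the $y$-factorization $\pi = u_1 u_2 y u_4 u_5$, apply $\varphi_y$ to obtain $\varphi_y(\pi) = u_1 u_4 y u_2 u_5$, and then read off the new $x$-factorization $\alpha_1 \alpha_2 x \alpha_4 \alpha_5$. Since every element of $u_2 \cup u_4$ is strictly less than $y$, the condition $y > x$ forces $y \in v_1 \cup v_5$, whereas $y < x$ allows $y$ to sit in any of the five blocks. I will argue $\max(v_2) = \max(\alpha_2)$ in detail; the identity $\max(v_4) = \max(\alpha_4)$ follows by the analogous left-right symmetric argument.

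Suppose $y > x$ and (by symmetry) $y \in v_1$; write $v_1 = A y B$. Since $v_2, x, v_4$ all lie below $y$, the block $u_4$ either stops inside $B$ (when $B$ contains an element $\geq y$) or extends through all of $B$, past $v_2, x, v_4$, and into a prefix of $v_5$. In each sub-case a short check shows that the element immediately left of $v_2$ in $\varphi_y(\pi)$ is strictly greater than $x$: when $u_4 \subsetneq B$ it is the last element of $v_1$, which is $> x$ by maximality of $v_2$; when $u_4$ swallows $B$ and $B$ is non-empty, the same element still works; when $B = \emptyset$, the maximality of $u_2$ supplies an element $\geq y > x$ immediately left of $v_2$ in $\varphi_y(\pi)$. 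Consequently $v_2$ persists as the maximal $<x$-block left of $x$, giving $\alpha_2 = v_2$ as sequences.

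Now suppose $y < x$. If $y \in v_1$, writing $v_1 = A y B$, the maximality of $v_2$ (whose left neighbor in $\pi$ must be $\geq x > y$) forces $u_4$ to be a proper prefix of $B$, so $\varphi_y$ acts entirely within $v_1$ and $v_2$ is untouched. If $y \in v_2$, writing $v_2 = v_2^L y v_2^R$, the same maximality bounds the search for $u_2$ (resp.\ $u_4$) within $v_2^L$ (resp.\ $v_2^R$), since the first element encountered outside $v_2$ is $\geq x > y$. Hence $u_2 \cup \{y\} \cup u_4 \subseteq v_2$ and $\varphi_y$ merely permutes the elements of $v_2$ amongst themselves, so that $\alpha_2$ and $v_2$ coincide as multisets and $\max(\alpha_2) = \max(v_2)$. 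The sub-cases $y \in v_4$ and $y \in v_5$ are handled by the symmetric maximality arguments (using that the first element of $v_5$ is $\geq x$ whenever $v_5$ is non-empty), which confine the $\varphi_y$ action strictly to the right of $x$ and leave $v_2$ untouched.

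The principal obstacle I anticipate is the bookkeeping: verifying in each sub-case that the relevant maximality (of $v_2$ in $\pi$, or of $u_2$ in $A$) rules out the configurations in which $u_2$ or $u_4$ would escape its expected home and allow $\alpha_2$ to gain or lose elements relative to $v_2$. The most delicate instance is $y \in v_2$ with $y < x$, where one must show both $u_2 \subseteq v_2^L$ and $u_4 \subseteq v_2^R$ to conclude that $\varphi_y$ permutes $v_2$ internally. Once those containments are in place, reading $\alpha_2$ off from $\varphi_y(\pi)$ is mechanical.
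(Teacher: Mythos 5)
Your proof is correct and takes essentially the same approach as the paper's: both are exhaustive case analyses of how the $x$- and $y$-factorizations interleave, with the "straddle" cases ($y$ inside $v_2$ or $v_4$) handled by showing $\varphi_y$ permutes that block internally. The only difference is framing — you organize the cases by the position of $y$ within the $x$-factorization (together with the sign of $y-x$), while the paper organizes by where the block $v_2\,x\,v_4$ lands within the $y$-factorization $w_1w_2yw_4w_5$; these enumerate the same case space.
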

\begin{proof}
Let $x,y$ be two distinct elements in $[n]$. Let \begin{align*}
    v_1\, v_2\, x\, v_4\,v_5&\quad\mbox{denote the $x$-factorization of $\pi$,}\\
    w_1\, w_2\, y\, w_4\,w_5&\quad\mbox{denote the $y$-factorization of $\pi$,}\\
    \alpha_1 \,\alpha_2 \,x\, \alpha_4\,\alpha_5&\quad\mbox{denote the $x$-factorization of $\varphi_y(\pi)$, and }\\
    \beta_1\, \beta_2\, y\, \beta_4\,\beta_5&\quad\mbox{denote the $y$-factorization of $\varphi_{y}(\pi)$.}
\end{align*}
There are six possible cases to consider.  In the first four cases, detailed below, the subword $v_2 x v_4$ remains unchanged in $\varphi_{y}(\pi)$, hence $v_2 x v_4=\alpha_2 x \alpha_4$.
\begin{enumerate}
\item If $v_2 x v_4$ lies in $w_2$ then the subword $v_2 x v_4$ remains together, but is moved to $\beta_4$ in $\varphi_{y}(\pi)$. In this case
$v_2 x v_4 = \alpha_2 x \alpha_4 $.
\item If $v_2 x v_4$ lies in $w_4$ then $v_2 xv_4$ remains together, but is moved to $\beta_2$ in $\varphi_{y}(\pi)$. In this case $v_2 x v_4 = \alpha_2 x \alpha_4 $.
\item If $v_2 x v_4$ lies in $w_1$ then $v_2 x v_4$ remains in $\beta_1$ in $\varphi_{y}(\pi )$. In this case $v_2 x v_4 = \alpha_2 x \alpha_4 $.
\item If $v_2 x v_4$ lies in $w_5$ then $v_2 x v_4$ remains together in $\beta_5$ in $\varphi_{y}(\pi)$.
\end{enumerate} 
In the last two cases, described below, either $v_2$ or $v_4$ is rearranged slightly in $\alpha_2$ or $\alpha_4$, but this does not affect the maximum element of $\alpha_2$ or $\alpha_4$ in $\varphi_{y}(\pi)$.
\begin{enumerate}
\item[(5)] {If $v_2 x$  lies in $w_1$ but $v_4$ does not lie entirely in $w_1$  then $y$ is contained in $v_4$.  
In this case the subword  $v_2 x  $ remains unchanged in $\varphi_{y}(\pi)$ in the sense that $v_2 x = \alpha_2 x $, and 
$v_4$ has some of its elements rearranged by $\varphi_{y}$ but the set of elements appearing in $\alpha_4$ remains the same (i.e. $v_2=\alpha_2$ and the underlying set of $v_4$ is equal to the underlying set of $\alpha_4$). Hence 
$\max(v_2)=\max(\alpha_2) \text{ and } \max(v_4)=\max(\alpha_4)$ in this case. 
}
\item[(6)] {If $xv_4$  lies in $w_5$ but $v_2$ does not lie entirely in $w_5$, then $y$ is contained in $v_2$.  
In this case the word  $xv_4$ remains unchanged in $\varphi_{y}(\pi)$ in the sense that $xv_4=x\alpha_4$, and 
$v_2$ has some of its elements rearranged by $\varphi_{y}$ but the set of elements appearing in $v_2$ remains the same. Hence 
$\max(v_2)=\max(\alpha_2) \text{ and } \max(v_4)=\max(\alpha_4 )$ in this case. }\qedhere
\end{enumerate}
\end{proof}

We can also define an $x$-factorization of any subword of a permutation.  That is, given a subword $\sigma=s_1s_2\cdots s_\ell$ of a permutation $\pi\in\sS_n$, and $x=s_i$ for some $1\leq i\leq \ell$, then the $x$-factorization of $\sigma$ is $w_1w_2xw_4w_5$ where  $w_2$ is the longest contiguous subword immediately to the left of $x$ such that all values are less than $x$ and $w_4$ is the longest contiguous subword immediately to the right of $x$ such that all letters of $w_4$ are less than $x$. We then define $\varphi_x(\sigma)$ to be 
$$\varphi_x(\sigma)=w_1w_4 xw_2w_5.$$

\begin{lemma}\label{lem:FS restrict=restrict FS}
Let $\pi$ be a permutation with pinnacle set $P$ and vale set $V$. If $\pi|_P=p_1p_2\cdots p_\ell$ and  $\pi|_V=v_1v_2 \cdots v_{\ell+1}$, then $\vp_{p_i}(\pi)|_{P\cup V} = \vp_{p_i}(\pi|_{P\cup V})$ for any $i \in [\ell]$.
\end{lemma}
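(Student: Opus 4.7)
The plan is to compare the $p_i$-factorization of $\pi$ with that of the word $\sigma := \pi|_{P\cup V}$, and to show that the block swap performed by $\vp_{p_i}$ commutes with the operation of restricting to $P\cup V$.

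First I would determine which pinnacles and vales of $\pi$ lie in each piece of the $p_i$-factorization $\pi = w_1 w_2 p_i w_4 w_5$. The key structural observation is that $\pi$ alternates ascending runs from each vale $v_a$ up to the next pinnacle $p_a$, and descending runs from $p_a$ down to the next vale $v_{a+1}$; hence, once we enter a region all of whose values lie below $p_i$, we remain there until we cross some pinnacle that exceeds $p_i$. Making this precise, let $j$ be the smallest index for which $p_j, p_{j+1}, \ldots, p_{i-1}$ are all less than $p_i$, and let $k$ be the largest index for which $p_{i+1}, \ldots, p_{k-1}$ are all less than $p_i$ (with the natural empty conventions at the endpoints $j=1$ and $k=\ell+1$). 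I would then argue that the pinnacles and vales of $\pi$ appearing in $w_2$ are exactly $v_j, p_j, v_{j+1}, \ldots, p_{i-1}, v_i$ in order, and those in $w_4$ are exactly $v_{i+1}, p_{i+1}, \ldots, p_{k-1}, v_k$ in order, with the remaining pinnacles and vales lying in $w_1$ and $w_5$. The forward inclusion uses that the runs involved stay below $p_i$; the reverse inclusion uses maximality of the $p_i$-factorization, since $p_{j-1}$ and $p_k$ (when present) exceed $p_i$ and block further extension.

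Next I would compute the $p_i$-factorization of $\sigma = v_1 p_1 v_2 p_2 \cdots p_\ell v_{\ell+1}$ directly inside the word $\sigma$: scanning leftward from $p_i$, the longest contiguous block of entries less than $p_i$ is precisely $v_j p_j \cdots p_{i-1} v_i$, and symmetrically on the right I obtain $v_{i+1} p_{i+1} \cdots p_{k-1} v_k$, with the same indices $j$ and $k$ as above (since $p_{j-1} > p_i$ and $p_k > p_i$ terminate the scan in $\sigma$ as well). Consequently the middle blocks of the $p_i$-factorizations of $\pi$ and of $\sigma$ coincide as ordered subwords of $P\cup V$.

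To conclude, both $\vp_{p_i}(\pi)|_{P\cup V}$ and $\vp_{p_i}(\sigma)$ are obtained from $\sigma$ by interchanging these two middle blocks while leaving the outer portions (contributed by $w_1$ and $w_5$) unchanged, and therefore the two words are identical. The main obstacle is the careful bookkeeping in the first step: one must verify that the boundary between $w_1$ and $w_2$ falls strictly between the pinnacle $p_{j-1}$ and the descending run down to $v_j$ (and symmetrically between $v_k$ and $p_k$ on the right), which follows from the alternation of pinnacles and vales in $\pi$ together with the monotonicity of each run.
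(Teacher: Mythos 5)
Your proposal is correct and follows essentially the same strategy as the paper's proof: identify the $p_i$-factorizations of $\pi$ and of $\sigma=\pi|_{P\cup V}$, and show that restricting the blocks of the former to $P\cup V$ reproduces the blocks of the latter, using the fact that the first pinnacle to the left (resp.\ right) of $p_i$ that exceeds $p_i$ terminates both scans at corresponding places. The paper gets there a bit more economically by observing only that the right-most pinnacle of $w_1$ and the left-most pinnacle of $w_5$ must exceed $p_i$ (so $w_1|_{P\cup V}$ and $w_5|_{P\cup V}$ are exactly the outer blocks of $\sigma$'s factorization), while you track the full list of pinnacles and vales landing in $w_2$ and $w_4$; this is more detailed bookkeeping but leads to the same conclusion. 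One small point worth making explicit in a final write-up, as the paper does, is the degenerate case where there is no pinnacle exceeding $p_i$ on one side (your $j=1$ or $k=\ell+1$): then $w_1$ or $w_5$ may still be nonempty in $\pi$ (containing non-pinnacle, non-vale letters $\ge p_i$) while the corresponding block in $\sigma$ is empty, and one should note that its restriction to $P\cup V$ is nonetheless empty so the identification still holds.
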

\begin{proof}
Fix a pinnacle $p_i$ in $\pi$ and consider the $p_i$-factorization
\(\pi=w_1w_2p_iw_4w_5.\)
Now define $\alpha_i=w_i|_{P\cup V}$ for each $i=1,2,3,4$. Then,
\begin{align*}
    \varphi_{p_i}(\pi)|_{P\cup V}&=(w_1\,w_4\, p_i\, w_2\, w_5)|_{P\cup V}\\
    &=w_1|_{P\cup V}\; w_4|_{P\cup V}\; p_i \; w_2|_{P\cup V}\; w_5|_{P\cup V}\\
    &=\alpha_1\; \alpha_4\; p_i \;\alpha_2\; \alpha_5.
\end{align*}

In the case where neither $w_1$ nor $w_5$ are empty, suppose $p'$ is the right most pinnacle in $w_1$, and $p''$ is the left most pinnacle in $w_5$. Hence, $p'>p_i$ and $p''>p_i$. 
Now consider 
\[\pi|_{P\cup V}=v_1\, p_1\, v_2\, p_2\,\cdots\, v_i\, p_i\, v_{i+1}\,\cdots\, p_\ell\, v_{\ell+1}.\]
Let $\pi|_{P\cup V}=w_1'w_2'p_iw_4'w_5'$ be the $p_i$-factorization of $\pi|_{P\cup V}$. Since $p'>p_i$ and $p''>p_i$, then $p'\in w_1'$ and $p''\in w_5'$. It now follows that $w_1'=w_1|_{P\cup V}=\alpha_1$ and $w_5'=w_5|_{P\cup V}=\alpha_5$. Thus, the $p_i$-factorization of $\pi|_{P\cup V}$ is \[\pi|_{P\cup V}=\alpha_1\alpha_2p_i\alpha_4\alpha_5.\]
Therefore 
\[    \varphi_{p_i}(\pi|_{P\cup V})=\alpha_1 \alpha_4 p_i  \alpha_2 \alpha_5=  \varphi_{p_i}(\pi)|_{P\cup V}.\]

Note that $w_2,w_4$ cannot be empty as $p_i$ is a pinnacle, and so the  proof is complete by noting that whenever $w_1$ or $w_5$ are empty, it implies $w_i=w_i'=\alpha_i=\emptyset$ for $i=1,5$, respectively. 
\end{proof}

We now define the notion of $FS$-minimal permutations and proceed to show that there is a unique $FS$-minimal permutation in each dual Foata-Strehl orbit of $\sS_n$.   
\begin{definition}
 A permutation $\pi$ is \textbf{FS-minimal} if 
 $\pi$ contains no double descents and 
 for each $p \in \Pin(\pi)$ the $p$-factorization $w_1\, w_2\, p\, w_4\, w_5$ of $\pi|_{P\cup V}$ satisfies $\max(w_2)<\max(w_4)$.
\end{definition}
 
\begin{thm}\label{thm:uniqueFSminimal}
If $\pi$ is a permutation with pinnacle set $P$ and vale set $V$, then there is a unique $FS$-minimal permutation in the dual Foata-Strehl orbit of $\pi$.
\end{thm}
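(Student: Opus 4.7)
The plan is to assign each permutation $\sigma \in \Orb_\vp(\pi)$ a binary ``status vector'' indexed by the non-vales $y \in [n] \setminus V$, and to show that this vector characterizes $\sigma$ within the orbit so that $FS$-minimality pins each coordinate to a unique value. For a non-vale $y$, define the \emph{status of $y$ in $\sigma$} as: (a) whether $y$ sits at a double ascent (DA) or a double descent (DD) if $y$ is neither a pinnacle nor a vale; and (b) whether $\max(w_2) < \max(w_4)$ or $\max(w_2) > \max(w_4)$ in the $y$-factorization of $\sigma|_{P \cup V}$ if $y$ is a pinnacle. With this definition, $\sigma$ is $FS$-minimal precisely when every status is minimal, i.e., DA for every non-pinnacle non-vale and $\max(w_2) < \max(w_4)$ for every pinnacle.

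The heart of the argument is to show that for distinct non-vales $y \ne z$, the action of $\vp_y$ \emph{toggles} the status of $y$ and \emph{preserves} the status of $z$. Toggling is direct: if $y$ is not a pinnacle or vale, then exactly one of $w_2, w_4$ in the $y$-factorization is empty, and $\vp_y$ swaps which is empty, flipping DA $\leftrightarrow$ DD; if $y$ is a pinnacle, $\vp_y$ swaps $w_2$ and $w_4$, flipping the $\max$-comparison. For preservation, \cref{lemma:xy} gives that the pair $(\max(w_2),\max(w_4))$ in the $z$-factorization of $\sigma$ equals that of $\vp_y(\sigma)$; in particular the emptiness pattern of $w_2, w_4$ is preserved, which settles the DA/DD label whenever $z$ is a non-pinnacle non-vale. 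When $z$ is a pinnacle one additionally combines \cref{lemma:xy} with \cref{lem:FS restrict=restrict FS} (when $y$ is a pinnacle), or with the direct observation that $\vp_y(\sigma)|_{P\cup V}=\sigma|_{P\cup V}$ when $y \notin P \cup V$ (since $\vp_y$ only shifts $y$ itself across an otherwise intact contiguous block, and $y \notin P \cup V$). Granting this independence, \emph{existence} is obtained by setting $S$ to be the set of non-vales whose status in $\pi$ is non-minimal and verifying that every status of $\vp_S(\pi)$ is minimal; \emph{uniqueness} follows because if $\pi_0, \pi_0' \in \Orb_\vp(\pi)$ are both $FS$-minimal with $\pi_0'=\vp_{S'}(\pi_0)$, then any non-vale $y \in S'$ would force the status of $y$ in $\pi_0'$ to be non-minimal, a contradiction, so $S' \subseteq V$ and $\vp_{S'}$ acts trivially.

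The main obstacle is the preservation half of this claim. For non-pinnacle $z$ it drops straight out of \cref{lemma:xy}, but for a pinnacle $z$ the status is defined in terms of the restricted permutation $\sigma|_{P \cup V}$, so the argument must reconcile the action on $\sigma$ with the associated action on $\sigma|_{P \cup V}$; this is where \cref{lem:FS restrict=restrict FS} and the case analysis for $y \notin P \cup V$ enter. Once this is in place, \cref{thm:orbitsize} provides a bonus consistency check: there are exactly $n - |P| - 1 = (n - |P| - |V|) + |P|$ independent binary statuses, matching $|\Orb_\vp(\pi)| = 2^{n-|P|-1}$, so the status vector gives a bijection between $\Orb_\vp(\pi)$ and $\{0,1\}^{n-|P|-1}$ that identifies the unique $FS$-minimal element as the all-minimal vector.
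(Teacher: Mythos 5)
Your argument is correct and mirrors the paper's own proof: both hinge on Lemmas~\ref{lemma:xy} and~\ref{lem:FS restrict=restrict FS} to establish that applying $\vp_y$ flips a single ``status'' at $y$ (double descent versus double ascent if $y \notin P \cup V$, or the $\max(w_2)$ versus $\max(w_4)$ comparison in the factorization of $\sigma|_{P\cup V}$ if $y \in P$) while leaving the statuses of all other non-vales fixed, and then derive existence by setting $S$ to the non-minimal coordinates and uniqueness by noting any non-vale in $S$ would produce a non-minimal status. The paper packages the correction in two batches (the sets $R$ and $T$, with $\rho$ then $\tau$) rather than your one-shot status-vector bookkeeping, but the key lemmas, the independence argument, and the uniqueness step are the same in substance.
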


\begin{proof}
We first show there is an $FS$-minimal permutation in each orbit and then show this permutation is unique. Let $\pi$ be a permutation with $$\pi|_P=p_1p_2\cdots p_\ell, \quad \pi|_V=v_1v_2\ldots v_{\ell+1},\quad \text{ and } \quad \pi|_{P\cup V} = v_1p_1v_2p_2\cdots v_\ell p_\ell v_{\ell+1}.$$ Let $$R=\{r \in [n]\setminus (P \cup V): r\ \mbox{appears  left of $v_1$ or between  $p_k$ and $v_{k+1}$ for some $1\leq k\leq \ell$}\},$$
that is $r$ is either in the beginning descending segment of $\pi$ or in a descending segment strictly between a pinnacle and a vale. Note that this implies that $\pi$ has $|P|+|R|$ descents. 
The $r$-factorization of $\pi$ is then
   $w_1\emptyset r w_4 w_5$ 
and 
\[\varphi_r(\pi)=w_1 w_4 r\emptyset w_5.\]
In $\varphi_r(\pi)$ we solely moved $r$ from  a descending segment to an ascending segment and left the rest of $\pi$ unchanged. Hence,  $ \varphi_r(\pi)$ has one fewer descent than $\pi$, since the relative order of the entries in $w_4$ remains unchanged. Then let $\rho(\pi):=\prod_{r\in R}\varphi_r(\pi)$.
By this construction, $\rho(\pi)$ has only $|P|$ descents occurring only at the indices of pinnacles (at the peak set of $\pi$), and none of these descents occur consecutively, i.e. there are no double descents.  
    
Let $$T=\{p\in P: \mbox{ the $p$-factorization $w_1 w_2 p w_4 w_5$ of } \pi|_{P\cup V} \mbox{ satisfies } \max(w_2)>\max(w_4) \},$$ 
and define $\tau(\rho(\pi)):=\prod_{t\in T}\varphi_t(\rho(\pi))$.     {We now claim that  $\tau(\rho(\pi))$ is $FS$-minimal.      Since $\rho(\pi)$ has no double descents, then by Lemma \ref{lem:FSdescents}, $\tau(\rho(\pi))$ has no double descents. }
    
Let $t \in T.$ By definition of the dual Foata-Strehl action, $\varphi_t(\rho(\pi))$ satisfies that $\max(w_2)<\max(w_4)$ in the $t$-factorization of $\varphi_t(\rho(\pi))|_{P\cup V}$. By Lemma \ref{lemma:xy}, for all other pinnacles $p \in P$, applying $\varphi_t$ to $\rho(\pi)$ does not change $\max(w_2)$ nor $\max(w_4)$ in the $p$-factorization of $\rho(\pi)$. Repeating this argument for all other elements of $T$ and using the fact that by Lemma \ref{lem:FS restrict=restrict FS} we can apply the dual Foata-Strehl action and then restrict to $P\cup V$ or restrict to $P\cup V$ and then apply the dual Foata-Strehl action and the result is the same, shows that $\tau(\rho(\pi))$ is $FS$-minimal.
    
To show this permutation is unique,  
suppose $\pi$ and $\sigma$ are both FS-minimal and lie in the same dual Foata-Strehl orbit.
Then $\pi = \vp_S(\sigma)$, for some $S \subseteq [n]$.
We will show that $S \subseteq V$, and since $\varphi_v(\sigma)=\sigma$ for all $v \in V$, then  $\pi=\vp_S(\sigma)= \sigma$.

Suppose $p \in P$. We will first show $P\cap S=\emptyset$. 
If $w_1'w_2'pw_4'w_5'$ is the $p$-factorization of $\vp_p(\sigma)|_{P\cup V}$, then $\max(w_2') > \max(w_4')$, since $\sigma$ is FS-minimal.
Lemma \ref{lemma:xy} shows that for any $k\in S$, applying $\vp_k$ to $\vp_p(\sigma)|_{P\cup V}$ would not change this inequality, thus $p \not \in S$ as otherwise $\max(w_2')>\max(w_4')$ in the $p$-factorization of $\pi$, contradicting that it is $FS$-minimal. Hence, $P\cap S=\emptyset$.

Since applying the dual Foata-Strehl action at a vale leaves a permutation unchanged, it suffices to show  $S\cap \left([n] \setminus (P \cup V)\right)=\emptyset$ to conclude $S\subset V$ and $\pi=\sigma_S(\sigma)=\sigma$.   
Suppose by contradiction that there is an element $r$ in $[n] \setminus (P \cup V)$ that lies in $S$. Since $\sigma$ has no double descents, $r$ must belong to an ascending segment, i.e.,
the $r$-factorization of $\sigma$ is then
   $w_1w_2 r \emptyset w_5$ 
   and 
    \[\varphi_r(\sigma)=w_1 \emptyset r w_2 w_5.\]
     In $\varphi_r(\sigma)$ we solely moved $r$ from  an ascending segment to a descending segment and left the rest of $\sigma$ unchanged. Applying the dual Foata-Strehl action at any other element of $[n] \setminus (P \cup V)$ will simply move an element from an ascending segment to a descending segment, hence it will not remove the double descent created in $\varphi_r(\sigma)$. Thus, $\pi=\varphi_S(\sigma)$ will contain a double descent, which contradicts the fact it is $FS$-minimal. We conclude that  $S\subset V$ and $\pi=\sigma_S(\sigma)=\sigma$.
\end{proof}

\section{Constructing and counting permutations with a fixed pinnacle set} \label{sec:count}

In this section we count the number of dual Foata-Strehl orbits with permutations having pinnacle set $P$ by counting the number of FS-minimal permutations with pinnacle set $P$ in $\sS_n$. Recall that a pair of sets $(P,V)$ is considered {admissible} if there is a permutation with pinnacle set $P$ and vale set $V$. Given an admissible tuple $(P,V)$ and a fixed integer $k \in [n]$, we set the following notation:

\begin{itemize}
\item Given a nonempty word $w$ of some letters in $[n]$, let $\max(w)$ be the largest number that appears in the word $w$.
\item Let $V_k = \{v \in V : v < k\}$. 
\item Let $P_k = \{p \in P : p < k\}$.
\item Let $\nPV(k) = |V_k| - |P_k|$, counting the number of vales less
than $k$, minus the number of pinnacles less than $k$.
\end{itemize}

\begin{lemma}\label{lem:admissible}
If $\pi$ is a permutation with pinnacle set $P=\{p_1<p_2<\cdots<p_\ell\}$ and vale set $V=\{v_1<v_2<\cdots<v_{\ell+1}\}$, then for all $1\leq i\leq \ell-1$, 
\begin{enumerate}[(a)]
    \item $1 \in V$, \label{part:1inV}
    \item $|V|=|P|+1$, \label{part:V=P+1}
     \item $2\leq \nPV(p_{i}),$\label{part:i}
     \item $v_{i+1}<p_i$ for all $i \in\{1,2,\dots,\ell\}$, \label{part:newaddon}
      \item $\nPV(p_{i})\leq \nPV(p_{i+1})+1,$\label{part:iandi+1}
    \item $\nPV(p_\ell)=2.$ \label{part:largestp}
\end{enumerate}
Furthermore, properties \eqref{part:i} and \eqref{part:newaddon} are equivalent.
\end{lemma}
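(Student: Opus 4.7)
The plan is to handle the six assertions and the equivalence one at a time, leaving property (d) (equivalently (c)) as the main geometric step. Properties (a) and (b) are essentially free: the value $1$ is the global minimum of $[n]$, so for whichever position $k$ has $\pi_k=1$, both positional neighbors are strictly larger than $1$ and, combined with the convention $\pi_0=\pi_{n+1}=\infty$, this forces $\pi_{k-1}>1<\pi_{k+1}$; and (b) is exactly Lemma~\ref{numval}.

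For the equivalence of (c) and (d) at a fixed index $i$, I would use a one-line counting identity. Since the pinnacles are listed in increasing order, $|P_{p_i}|=i-1$, so $\nPV(p_i)=|V_{p_i}|-(i-1)$. Therefore $\nPV(p_i)\ge 2$ is the same as $|V_{p_i}|\ge i+1$, and this in turn is the same as the $(i+1)$-st smallest vale $v_{i+1}$ satisfying $v_{i+1}<p_i$. Thus proving either (c) or (d) at a given $i$ yields the other for free.

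The core step is (d). My plan is to fix $i$, consider the $i$ smallest pinnacles $p_1,\dots,p_i$ in their order of appearance in $\pi$, and note that they partition $\pi$ into $i+1$ disjoint regions (everything before the first such pinnacle, between consecutive ones, and after the last). In each region I would produce at least one vale strictly less than $p_i$. The key observation is that in the alternating restriction $\pi|_{P\cup V}=v\,p\,v\,p\cdots v$, every vale adjacent to any pinnacle $p_j$ with $j\le i$ is strictly less than $p_j\le p_i$. Taking, in each region, the vale of $\pi|_{P\cup V}$ immediately adjacent to the bordering pinnacle(s) yields $i+1$ vales below $p_i$, hence $|V_{p_i}|\ge i+1$. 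The main obstacle is confirming distinctness, particularly when two consecutive (in permutation order) pinnacles from $\{p_1,\dots,p_i\}$ happen to be adjacent in $\pi|_{P\cup V}$; since the $i+1$ regions are disjoint substrings of $\pi$, the selected vales automatically occupy disjoint positions and therefore are distinct.

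Finally, (e) follows at once from $V_{p_i}\subseteq V_{p_{i+1}}$ (which holds because $p_i<p_{i+1}$), giving $\nPV(p_{i+1})-\nPV(p_i)=|V_{p_{i+1}}|-|V_{p_i}|-1\ge -1$; and (f) is (d) applied at $i=\ell$, since $v_{\ell+1}<p_\ell$ forces all $\ell+1$ vales to lie below $p_\ell$, so $|V_{p_\ell}|=\ell+1$ and $\nPV(p_\ell)=(\ell+1)-(\ell-1)=2$.
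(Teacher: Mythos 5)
Your argument is correct and follows essentially the same route as the paper's. Both proofs reduce (c) and (d) to the single claim $|V_{p_i}|\ge i+1$, established by locating a vale adjacent to each of the $i$ smallest pinnacles and arguing that these vales (plus one extra) are distinct and below $p_i$; your explicit ``$i+1$ disjoint regions'' framing is a slightly more careful rendering of the paper's one-sentence argument (``there are at least $i+1$ vales around the $i$ pinnacles in $P'$''), but the underlying idea and the treatment of (a), (b), (e), (f) and the equivalence are the same.
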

\begin{proof}
For part \eqref{part:1inV}, since $1$ appears in $\pi$, we must have $\pi_i=1$ for some $i \in [n]$. Since $\pi_{i-1}>1<\pi_{i+1}$ then $1 \in V$. 

For part \eqref{part:V=P+1}, by definition the order of the pinnacles and vales alternates from vale to pinnacle and ends with a vale. Hence the claim follows.

For part \eqref{part:i}, let $p_i$ be any pinnacle in $P$. Consider the set  $P'=\{p_1,\cdots, p_{i-1},p_i\}\subseteq P$. Because each pinnacle has a vale smaller than it to its left and one to its right, and there is a vale between any two pinnacles, then there are at least $i+1$ vales (those around the $i$ pinnacles in $P'$) smaller than $p_i$ in $\pi$. Thus,
$$\nPV(p_i)=|V_{p_i}|-|P_{p_i}|\geq (i+1)-(i-1)=2. $$

For part \eqref{part:newaddon}, since $\nPV(p_i)=|V_{p_i}|-|P_{p_i}|\geq 2$ and $P_{p_i}=i-1$ for each $i$ in $\{1,2,\dots,\ell\}$, then $V_{p_i}\geq i+1$. That is, there are at least $i+1$ vales less than $p_i$. Hence, $v_{i+1}<p_i$.

For part \eqref{part:iandi+1}, note that since $p_{i+1}>p_i$, we must have $|V_{p_{i+1}}|\geq |V_{p_{i}}|$. Since there are $j-1$ pinnacles smaller than $p_j$ for any $p_j \in P$, we get $|P_{p_{i}}|=i-1$ and $|P_{p_{i+1}}|=i$. Thus, 
    $$\nPV(p_{i+1})=|V_{p_{i+1}}|-|P_{p_{i+1}}|\geq |V_{p_i}|-(|P_{p_i}|+1)=|V_{p_i}|-|P_{p_i}|-1=\nPV(p_{i})-1.$$

For part \eqref{part:largestp}, since $v_{\ell+1}<p_\ell$ by property \eqref{part:newaddon}, then $V_{p_\ell}=V$ and 
$$\nPV(p_\ell)=|V_{p_\ell}|-|P_{p_\ell}|=(\ell+1)-(\ell-1)=2.$$

To prove the last statement note that we already showed $\eqref{part:i}\Rightarrow\eqref{part:newaddon}$. For the reverse, if $v_{i+1}<p_i$ then $\nPV(p_i)=|V_{p_i}| - |P_{p_i}|\geq i+1-(i-1)=2.$
\end{proof}

 We now describe which pairs $(P,V)$ are admissible.

\begin{proposition}\label{prop:PVadmissible}
A pair $(P,V)$ is admissible if and only if properties $(a)$, $(b)$ and $(c)$ (or properties $(a)$, $(b)$ and $(d)$) from Lemma \ref{lem:admissible} hold.
\end{proposition}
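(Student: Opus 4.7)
The plan is to prove only the reverse direction of the biconditional, since Lemma \ref{lem:admissible} already supplies the forward direction and the equivalence of (c) and (d). Concretely, given $(P, V)$ satisfying (a), (b), and (d), with $P = \{p_1 < \cdots < p_\ell\}$ and $V = \{v_1 < \cdots < v_{\ell+1}\}$, I will exhibit a single permutation $\pi \in \sS_n$ with $\Pin(\pi) = P$ and $\V(\pi) = V$.

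First I would interleave the two sorted sequences in the most natural way and then prepend the leftover letters in decreasing order. By (a), $v_1 = 1$, so every element of $[n] \setminus (P \cup V)$ exceeds $v_1$. Listing $[n] \setminus (P \cup V) = \{r_1 < r_2 < \cdots < r_k\}$, I would set
\[
\pi \;=\; r_k\, r_{k-1}\, \cdots\, r_1\, v_1\, p_1\, v_2\, p_2\, \cdots\, v_\ell\, p_\ell\, v_{\ell+1}.
\]
With the convention $\pi_0 = \pi_{n+1} = \infty$, the initial block $r_k r_{k-1} \cdots r_1 v_1$ is strictly decreasing, so the only position in it contributing an extremum is $v_1$, which satisfies $r_1 > v_1 < p_1$ and is therefore a vale of $\pi$.

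For the alternating tail, property (d) combined with the sorted order of the $v_j$ gives $v_i < v_{i+1} < p_i$, so each $p_i$ exceeds both of its neighbors and is a pinnacle; the same bound applied with the index shifted, $v_i = v_{(i-1)+1} < p_{i-1}$, shows each interior $v_i$ is a vale, and $v_{\ell+1}$ is a vale since $p_\ell > v_{\ell+1} < \pi_{n+1}$. Hence $\Pin(\pi) = P$ and $\V(\pi) = V$, so $(P, V)$ is admissible. The main obstacle is essentially absent, because property (d) is precisely the inequality needed to make the sorted interleaving valid; the only degenerate case is $\ell = 0$, handled by taking $\pi = n(n-1)\cdots 1$, a strictly decreasing permutation with no pinnacles whose only vale occurs at value $1$.
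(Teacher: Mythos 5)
Your reverse-direction construction is correct and is a valid alternative to the paper's. The paper builds $\alpha = v_1\, a_1\, p_1\, v_2\, a_2\, p_2 \cdots v_\ell\, a_\ell\, p_\ell\, v_{\ell+1}$, distributing the non-pinnacle, non-vale letters onto ascending runs $a_i$ according to where their values fall, whereas you gather all such leftover letters into a single strictly decreasing prefix $r_k\, r_{k-1}\cdots r_1$ placed in front of $v_1 = 1$, leaving the tail $v_1\, p_1\, v_2\, p_2 \cdots p_\ell\, v_{\ell+1}$ strictly alternating. Both designs hinge on the same chain $v_i < v_{i+1} < p_i$, obtained from property (d) together with the sorted order of $V$, to certify that each $p_i$ is a pinnacle and each $v_j$ a vale; and in both, the auxiliary letters lie on a monotone run and so create no extrema. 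Your variant buys a little simplicity in bookkeeping: the decreasing prefix absorbs every element of $[n]\setminus(P\cup V)$ uniformly, with no need to decide which ascending slope each leftover letter should occupy or to worry about letters whose values lie above $v_{\ell+1}$. Your appeal to the already-proved equivalence of (c) and (d) in Lemma~\ref{lem:admissible} to pass to (d) is also legitimate, since that equivalence is established there without invoking admissibility of $(P,V)$.
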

\begin{proof}
The forward direction is proven in Lemma \ref{lem:admissible}. For the backward direction, suppose $P$ and $V$ satisfy properties $(a)$, $(b)$ and $(c)$ from Lemma \ref{lem:admissible}. Thus, $P$ and $V$ can be written as 
$P=\{p_1<p_2<\cdots<p_\ell\}$ and $V=\{v_1<v_2<\cdots<v_{\ell+1}\}$ with $v_1=1$. We need to create a permutation with pinnacle set $P$ and vale set $V$. 

Let $n=p_\ell$. Let $\alpha$ be defined as follows:
\[\alpha=v_1\, a_1\, p_1\, v_2\, a_2\, p_2\,  \cdots\, v_\ell\, a_{\ell}\, p_\ell\, v_{\ell+1}. \]
where each $a_i$ is an ascending sequence containing the elements in $[n]\setminus (P\cup V)$ between $v_{i}$ and $v_{i+1}$. Since properties $(c)$ and $(d)$ are equivalent, then $v_{i+1}<p_i$ for all $i \in \{1,2,\dots,\ell\}$. Thus, $\alpha$ has pinnacle set $P$ and vale set $V$.
\end{proof}

\subsection{Creating and counting the number of permutations with a fixed pinnacle and vale set} \label{subsec:create}

Given an admissible pair $(P,V)$ we define a \textbf{$PV$-arrangement}, denoted $\alpha_{PV}$, to be a permutation of the elements of $P\cup V$ such that every element $p \in P$ is a pinnacle in $\alpha_{PV}$ and every element $v\in V$ is a vale in $\alpha_{PV}$. We say that a $PV$-arrangement $\alpha_{PV}$ is a \textbf{canonical} if for each $p \in P$ the $p$-factorization $w_1w_2pw_4w_5$ of $\alpha_{PV}$ satisfies $\max(w_2) <\max(w_4)$.

\begin{lemma}[Counting Canonical $PV$-Arrangements]\label{lemma:cannonical}
For an admissible pair $(P,V)$ the number of canonical $PV$-arrangements is 
\begin{equation} \prod_{p \in P}\binom{\nPV(p)}{2}. \label{eqn:count} \end{equation}
\end{lemma}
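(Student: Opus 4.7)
The plan is to induct on $\ell := |P|$, contracting the smallest pinnacle and its two vale-neighbors at each step. The base case $\ell = 0$ is immediate: the sole arrangement is the single vale, vacuously canonical, matching the empty product.

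For the inductive step, let $p_1 = \min P$. In any canonical $PV$-arrangement $\alpha_{PV}$ the two immediate neighbors of $p_1$ must both be vales in $V_{p_1}$, because $p_1$ is the smallest pinnacle and the arrangement alternates vales and pinnacles. Writing these neighbors as $v_a$ (left) and $v_b$ (right), the $p_1$-factorization of $\alpha_{PV}$ reduces to $w_2 = (v_a)$ and $w_4 = (v_b)$: the elements just beyond $v_a, v_b$ are pinnacles exceeding $p_1$ (or the boundary), so neither subword extends further. Canonicality at $p_1$ therefore amounts to $v_a < v_b$. I then define $\Phi(\alpha_{PV}) := \alpha'$, where $\alpha'$ contracts the length-three subword $v_a\,p_1\,v_b$ into the single letter $p_1$. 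One checks that $\alpha'$ is an arrangement of $P' \cup V'$ with $P' := P \setminus \{p_1\}$ and $V' := (V \setminus \{v_a, v_b\}) \cup \{p_1\}$, and that $p_1$ --- now flanked by pinnacles larger than itself (or boundaries) --- plays the role of a vale.

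The key step is canonicality preservation. For any $p \in P'$ the barriers of its $p$-factorization (the pinnacles exceeding $p$) coincide in $\alpha_{PV}$ and $\alpha'$, so the contracted triple $v_a\,p_1\,v_b$ lies entirely inside one of $w_1, w_2, w_4, w_5$. If it lies in $w_2$, then $p_1 \in w_2$ and $v_a, v_b < p_1 \le \max(w_2)$, so $\max(w_2') = \max(w_2 \setminus \{v_a, v_b\}) = \max(w_2)$, and similarly if the triple lies in $w_4$. In every case $\max(w_2') < \max(w_4')$ is inherited from the original inequality, making $\alpha'$ canonical. The reverse expansion --- replacing $p_1$ in $\alpha'$ by $v_a\,p_1\,v_b$ for a prescribed pair $\{v_a, v_b\} \subseteq V_{p_1}$ with $v_a < v_b$ --- is handled by the same argument, so $\Phi$ is bijective. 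A direct calculation gives $\nPV'(p) = (|V_p|-1) - (|P_p|-1) = \nPV(p)$ for every $p \in P'$, and $(P', V')$ is admissible by Lemma~\ref{lem:admissible}: $|V'| = |P'|+1$, the minimum of $P' \cup V'$ lies in $V'$ (it is either a remaining vale in $V_{p_1}$ or, when $|V_{p_1}|=2$, the element $p_1$ itself), and $\nPV'(p) \ge 2$.

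Because $|V_{p_1}| = \nPV(p_1)$ (no pinnacles are smaller than $p_1$), there are $\binom{\nPV(p_1)}{2}$ choices for the pair $\{v_a, v_b\}$, and the induction hypothesis --- applied to the admissible pair $(P', V')$ and using $\nPV'(p) = \nPV(p)$ --- gives the same count for each resulting pair. Combining these yields
\[
\#\{\text{canonical } PV\text{-arrangements}\} \;=\; \binom{\nPV(p_1)}{2}\prod_{p \in P'}\binom{\nPV(p)}{2} \;=\; \prod_{p \in P}\binom{\nPV(p)}{2}.
\]
I expect the canonicality-preservation step to be the main obstacle: verifying carefully that the contracted triple really sits in a single block of the $p$-factorization, and that the presence of $p_1$ in that block lets $\max$ survive the deletion of $v_a, v_b$.
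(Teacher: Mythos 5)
Your proof is correct and takes essentially the same inductive approach as the paper's: both peel off the smallest pinnacle $p_1$ together with its two flanking vales (yielding $\binom{\nPV(p_1)}{2}$ choices), replace them by a single new vale $p_1$ to form $(P',V')$, note $\nPV$ is unchanged on the remaining pinnacles, and invoke induction. You present the bijection primarily as a contraction and spell out the canonicality-preservation check in that direction, whereas the paper emphasizes the inverse insertion direction and is terser about surjectivity, but this is the same argument viewed from the other side.
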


\begin{proof}
We prove the result by induction on $|P|$. If $P=\emptyset$ then $V$ is a one element set $V=\{v\}$. In this case the only $PV$-arrangement is $\alpha=v$, which is canonical and is counted by the empty product in $\eqref{eqn:count}$. For completeness, we show the case $|P|=1$. If $P=\{p\}$ then $V$ is a set with two elements by Lemma \ref{lem:admissible}$(\ref{part:V=P+1})$, so $V=\{v_1,v_2\}$ for two elements $v_1,v_2 \in [n-1]$. Without loss of generality, let $v_1<v_2$. Then, the only $PV$-arrangements one could make are $\alpha_1=v_1pv_2$ and $\alpha_2=v_2pv_1$, of which only $\alpha_1$ is a canonical $PV$-arrangement. By Lemma \ref{lem:admissible}$(\ref{part:largestp})$, the product in $\eqref{eqn:count}$ is $\binom{\nPV(p)}{2}=\binom{2}{2}=1$, as desired.

Suppose the result is true for all pinnacle sets $P$ with cardinality $\ell-1$. Then if $|P|=\ell$, write $P=\{p_1, \ldots, p_{\ell}\}$ with $p_1<p_2<\cdots<p_\ell$. Choose any two elements $v_1,v_2 \in V$ such that $v_1<p_1$ and $v_2<p_1$ and let 
$$P'=P\setminus\{p_1\}\quad \text{ and } \quad V'=(V\setminus\{v_1,v_2\})\cup \{p_1\}.$$ 
Note that there are 
$$\binom{|V_{p_1}|}{2}=\binom{\nPV(p_1)}{2}$$ choices of $v_1$ and $v_2$. By Lemma \ref{lem:admissible}$(\ref{part:i})$, the number of choices is always at least 1.

For each such choice and for each canonical $P'V'$-arrangement, we will create one canonical $PV$-arrangement. By induction, this would imply that the number of canonical $PV$-arrangements is
\begin{equation}\label{eq:npv}\binom{\nPV(p_1)}{2}\prod_{p \in P'}\binom{\nPV[P'V'](p)}{2}.
\end{equation}
For $p\in P'$, 
$$\nPV[P'V'](p)=|V'_p|-|P'_p|=(|V_p|-1)- (|P_p|-1)=|V_p|-|P_p|=\nPV(p).$$
Thus, equation $\eqref{eq:npv}$ gives us
$$\binom{\nPV(p_1)}{2}\prod_{p \in P'}\binom{\nPV[P'V'](p)}{2}=\binom{\nPV(p_1)}{2}\prod_{p \in P'}\binom{\nPV(p)}{2}=\prod_{p \in P}\binom{\nPV(p)}{2}.$$

It remains to show that for each choice of $v_1,v_2\in V_{p_1}$ and for each canonical $P'V'$-arrangement, we can create a unique canonical $PV$-arrangement. To start, suppose without loss of generality that $v_1<v_2$. Let $\alpha_{P'V'}$ be a canonical $P'V'$-arrangement (hence,
$\Pin(\alpha_{P'V'})=P'$ and $\V(\alpha_{P'V'})=V'$). 
Thus, the element $p_1\in V'$ is a vale in $\alpha_{P'V'}$, so 
$$\alpha_{P'V'}=\cdots \,v_{k}\, p_{i}\,p_1\, p_{j}\, v_{k+1}\,\cdots, $$
for some pinnacles $p_i,p_j \in P'$ and vales $v_{k}, v_{k+1} \in V'$.
Insert $v_1$ to the left of $p_1$ and $v_2$ to the right of $p_1$ to create the permutation $\alpha_{PV}$
$$\alpha_{PV}=\cdots v_{k}\, p_{i}\,v_1\,p_1\,v_2\, p_j\, v_{k+1}\,\cdots.$$
 Note that $\alpha_{PV}$ is a $PV$-arrangement, that is, a permutation of $P\cup V$ with pinnacle set $P$ and vale set $V$. We claim this permutation is a canonical $PV$-arrangement. Since $\alpha_{P'V'}$ was a canonical $P'V'$-arrangement and we only added two numbers $v_1,v_2$ that are less than $p_1$ and adjacent to $p_1$, then $\max{w_2}$ is still less than $\max{w_4}$ in the $p'$-factorization of $\alpha_{PV}$ for any $p' \in P'$. For the $p$-factorization of $\alpha_{PV}$, note that $w_2=v_1$ and $w_4=v_2$ because all other peaks, particularly $p_i$ and $p_j$ are greater than $p_1$. Hence, $\max(w_2)<\max(w_4)$ in this factorization as well. Thus, $\alpha_{PV}$ is a canonical $PV$-arrangement.

To finish the proof, we need to show that all canonical $PV$-arrangements are created in this manner. For any canonical $PV$-arrangement $\alpha'_{PV}$, it must contain a subsequence $v_ip_1v_j$ with $v_i,v_j \in V_{p_1}$ and $v_i<v_j$. If we remove $v_i$ and $v_j$ from $\alpha'_{PV}$, we get a canonical $P'V'$-arrangement $\alpha'_{P'V'}$. Hence, $\alpha'_{PV}$ was obtained via our construction by choosing $v_i,v_j$ from $V_{p_1}$ and inserting $v_i$ and $v_j$ before and after $p_1$, respectively. Thus, our construction gives all canonical $PV$-arrangements.
\end{proof}

We are ready to prove the main theorem of the section. In it, we count the number of $FS$-minimal permutations with a given fixed pinnacle set $P$. By Theorem \ref{thm:uniqueFSminimal}, this also counts the number of dual Foata-Strehl orbits containing permutations with pinnacle set $P$.
\begin{thm}[FS-minimal permutations for each PV-arrangement] \label{thm:count}
For an admissible pair $(P,V)$, given a canonical $PV$-arrangement $\alpha_{PV}$, the number of FS-minimal permutations $\pi$ with $\pi|_{P\cup V}=\alpha_{PV}$, denoted $O_{PV}$, is
\[O_{PV} = \prod_{r \in [n] \setminus (P \cup V)} \nPV(r).\]
Furthermore, the number of all FS-minimal permutations with pinnacle set $P$, denoted $O_P$, is
\[O_P= \sum_{V\in \VV(P)}\prod_{p \in P}\binom{\nPV(p)}2\prod_{r \in [n] \setminus (P \cup V)} \nPV(r),\]
where $\VV(P)$ is the set of all vale sets $V$ for which the pair $(P,V)$ is admissible.
\end{thm}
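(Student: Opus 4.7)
The plan is to fix a canonical $PV$-arrangement $\alpha_{PV}$, count the $FS$-minimal permutations $\pi$ with $\pi|_{P\cup V}=\alpha_{PV}$, and then aggregate by combining Lemma~\ref{lemma:cannonical} with a sum over $V\in\VV(P)$ to obtain $O_P$. The condition $\max(w_2)<\max(w_4)$ in each $p$-factorization is a statement about $\pi|_{P\cup V}$ only, so canonicity of $\alpha_{PV}$ automatically verifies the second $FS$-minimal requirement; only the no-double-descent condition remains to be ensured when inserting the elements of $R:=[n]\setminus(P\cup V)$ into $\alpha_{PV}$.

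First I would pin down the shape of an $FS$-minimal extension of $\alpha_{PV}$. Since no $r\in R$ may become a new pinnacle or vale, each $r$ must sit in a monotone run of $\pi$. Avoiding two consecutive descents forces every descending run of $\pi$ to have length at most two, so the initial segment before the first vale of $\alpha_{PV}$, together with every segment from a pinnacle to the following vale of $\alpha_{PV}$, must contain no elements of $R$. Hence every $r\in R$ lies inside one of the ascending segments of $\alpha_{PV}$: the ascending run from a vale $v$ to its immediately-following pinnacle $p$ (which forces $v<r<p$, with the elements placed there in increasing order), or the terminal tail after the last vale of $\alpha_{PV}$ (which only requires $r$ to exceed that vale). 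Because the choice of segment is independent across the elements of $R$, and the internal order in each segment is uniquely determined,
\[O_{PV}=\prod_{r\in R}N(r),\qquad N(r):=\#\{\text{ascending segments of }\alpha_{PV}\text{ admitting }r\}.\]

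The central step, and the main obstacle, is the identity $N(r)=\nPV(r)$. My plan is structural: slice $\alpha_{PV}$ at every pinnacle exceeding $r$. Because any pinnacle $p<r$ appearing in $\alpha_{PV}$ is flanked on both sides by vales smaller than $p$, and hence smaller than $r$, each piece of this slicing is either a single vale exceeding $r$ or an alternating block of vales and pinnacles all below $r$ that begins and ends with a vale. Such a piece contributes one valid ascending segment---the run from its rightmost vale to either the adjacent large pinnacle that terminates the piece or the right end of $\alpha_{PV}$---exactly when its rightmost vale is below $r$. A piece-by-piece tally of the vales below $r$ then gives $|V_r|=|P_r|+N(r)$, which rearranges to $N(r)=|V_r|-|P_r|=\nPV(r)$.

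Once the first formula is in hand, summing $O_{PV}$ over $V\in\VV(P)$ and multiplying by the number $\prod_{p\in P}\binom{\nPV(p)}{2}$ of canonical $PV$-arrangements supplied by Lemma~\ref{lemma:cannonical} yields the stated expression for $O_P$.
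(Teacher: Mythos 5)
Your proof is correct and follows the same route as the paper: each $r\in[n]\setminus(P\cup V)$ must sit in one of the ascending segments of the resulting permutation (no double descents forces the descending runs to contain only pinnacle--vale pairs), the number of admissible segments for $r$ equals $\nPV(r)$, and the two formulas follow by multiplying over $r$ and then summing over canonical $PV$-arrangements via Lemma~\ref{lemma:cannonical}. The one step the paper asserts without justification is the identity between the number of admissible ascending segments and $\nPV(r)$; your slicing-at-large-pinnacles argument proves it correctly, though it admits a shortcut: indexing the $\ell+1$ ascending segments so that segment $i$ is bounded below by the vale $v_i$ and above by the pinnacle $p_i$ (setting $p_{\ell+1}=\infty$), the inequality $v_i<p_i$ gives $\{i:p_i<r\}\subseteq\{i:v_i<r\}$, hence $\#\{i:v_i<r<p_i\}=\#\{i:v_i<r\}-\#\{i:p_i<r\}=|V_r|-|P_r|=\nPV(r)$.
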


\begin{proof}
Let $\alpha_{PV}=v_1p_1v_2p_2 \cdots v_\ell p_\ell v_{\ell+1}$ be a canonical $PV$-arrangement. Since in a $FS$-minimal permutation each pinnacle is immediately followed by a vale, to count the number of FS-minimal permutations $\pi$ with $\pi|_{P\cup V}=\alpha_{PV}$, note that each element $r \in ([n]\setminus(P \cup V))$ must appear to the right of a vale $v_i$ less than $r$ and to the left of a pinnacle $p_i$ greater than $r$.    The number of such indices $i$ so that $r$ satisfies $v_i<r<p_i$ is precisely $|V_r|-|P_r| =\nPV(r)$. The total number of choices over all $r \in ([n]\setminus(P \cup V))$ is then $$\prod_{r \in [n] \setminus (P \cup V)} \nPV(r).$$
 
Once these choices are made, all of the elements (if any)  between each vale $v_i$ and pinnacle $p_i$ must appear in ascending order by the definition of an $FS$-minimal permutation. This proofs the first statement. The last statement follows by summing through all the canonical $PV$-arrangements and using Lemma \ref{lemma:cannonical}.
\end{proof}

\begin{ex} 
A canonical arrangement $\alpha_{PV}$ for $P=\{7,10, 12\}$ and $V=\{1,3,5,8 \}$ is depicted in Figure \ref{fig:cannonical}.  To construct FS-minimal permutations from $\alpha_{PV}$ we insert each element $r \in [n] \setminus (P \cup V) = \{ 2,4,6,9,11 \}$ in ascending order on the slopes between $v_i$ and $p_i$ satisfying $v_i<r<p_i$.
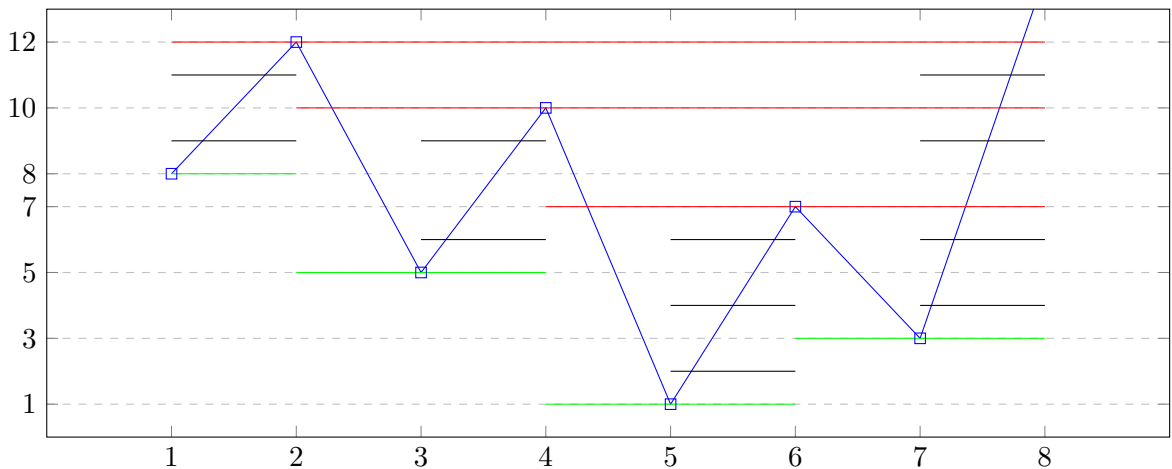
\begin{figure}[h]
\centering
\begin{tikzpicture}
\begin{axis}[
    height=\axisdefaultheight,
	width=\textwidth,
    title={}, 
    xtick={1,2,3,4,5,6,7,8},
    ytick={1,3,5,7,8,10,12},
    ymajorgrids=true,
    grid style=dashed,
    xmin = 0,
    xmax = 9,
    ymin = 0,
    ymax = 13
]

\addplot[color=blue,mark=square]coordinates{(1,8)(2,12)(3,5)(4,10)(5,1)(6,7)(7,3)(8,14)};
\addplot[color=green,domain=1:2]{8};
\addplot[color=red,domain=1:8]{12};
\addplot[color=green,domain=2:4]{5};
\addplot[color=red,domain=2:8]{10};
\addplot[color=green,domain=4:6]{1};
\addplot[color=red,domain=4:8]{7};
\addplot[color=green,domain=6:8]{3};
\addplot[color=black,domain=5:6]{2};
\addplot[color=black,domain=5:6]{4};
\addplot[color=black,domain=7:8]{4};
\addplot[color=black,domain=3:4]{6};
\addplot[color=black,domain=5:6]{6};
\addplot[color=black,domain=7:8]{6};
\addplot[color=black,domain=1:2]{9};
\addplot[color=black,domain=3:4]{9};
\addplot[color=black,domain=7:8]{9};
\addplot[color=black,domain=1:2]{11};
\addplot[color=black,domain=7:8]{11};
\end{axis}
\end{tikzpicture}
\caption{A canonical arrangement $\alpha_{PV} = [8,12,5,10,1,7,3]$.}\label{fig:cannonical}
\end{figure}

Table \ref{tab:ex cannonical} describes the possible locations where the elements $r \in [n] \setminus (P \cup V) = \{ 2,4,6,9,11 \}$ can be located in a FS-minimal permutation. For example, if we choose the first possible location listed in the third column in Table \ref{tab:ex cannonical}, the resulting FS-minimal permutation is $[8,9,11,12,5,6,10,1,2,4,7,3]$.

\begin{table}[h]
    \centering
\begin{tabular}{|c|c|l|} \hline  
$r$& $N_{PV}(r)$ & Possible locations of $r$ in FS-minimal permutation\\  \hline\hline
2 & 1 & between $(1,7)$  \\
4 & 2 & between $(1,7)$ or right of $3$ \\
6 & 3 & between $(5,10)$, between $(1,7)$, or right of $3$ \\
9 & 3 & between $(8,12)$, between $(5,10)$, or right of $3$ \\
11 & 2 & between $(8,12)$ or right of $3$ \\  \hline
\end{tabular} 
\caption{Example of possible locations of where to insert certain values to create an FS-minimal permutation.}
    \label{tab:ex cannonical}
\end{table}
Hence, in this case, there are $2\cdot 3  \cdot 3 \cdot 2 = 36$ FS-minimal permutations $\pi$ with $\pi|_{P\cup V} = \alpha_{PV}$.

\end{ex}

\begin{cor}\label{cor:main result}
If $P$ is an admissible pinnacle set, then 
\[|\Pin(P;n)|=2^{n-|P|-1}\cdot O_P=2^{n-|P|-1}\left(\sum_{V\in \VV(P)}\prod_{p \in P}\binom{\nPV(p)}2\prod_{r \in [n] \setminus (P \cup V)} \nPV(r)\right).\]
\end{cor}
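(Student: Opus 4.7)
The plan is to assemble the corollary from the four main ingredients developed earlier in the paper: the pinnacle-preserving property of the dual Foata-Strehl action, the uniform orbit size, the existence of a unique FS-minimal representative in each orbit, and the enumeration of FS-minimal permutations with pinnacle set $P$. Conceptually, we partition $\Pin(P;n)$ into $\varphi$-orbits, count orbits via FS-minimal representatives, and multiply by the common orbit size.

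First I would invoke Theorem \ref{pinpreserve} to conclude that the dual Foata-Strehl action restricts to an action of $\Z_2^n$ on $\Pin(P;n)$: since $\varphi_S$ preserves the pinnacle set of any permutation, the equivalence classes under $\sim$ partition $\Pin(P;n)$ into a disjoint union of orbits $\Orb_\varphi(\pi)$ with $\pi \in \Pin(P;n)$. Next, I would determine the size of each such orbit. By Lemma \ref{numval}, every $\pi \in \Pin(P;n)$ satisfies $v(\pi)=|P|+1$, and Theorem \ref{thm:orbitsize} then gives
\[|\Orb_\varphi(\pi)| = 2^{n-v(\pi)} = 2^{n-|P|-1}\]
for every such $\pi$, so all orbits inside $\Pin(P;n)$ have the same cardinality.

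It then remains to count the orbits. Theorem \ref{thm:uniqueFSminimal} supplies a unique FS-minimal permutation in each orbit of $\sS_n$ under $\varphi$; since the action preserves pinnacle sets, the FS-minimal representatives of orbits contained in $\Pin(P;n)$ are exactly the FS-minimal permutations with pinnacle set $P$. The number of such permutations is $O_P$, computed in Theorem \ref{thm:count} as
\[O_P = \sum_{V\in \VV(P)}\prod_{p \in P}\binom{\nPV(p)}{2}\prod_{r \in [n]\setminus(P\cup V)}\nPV(r).\]
Combining the orbit count and orbit size by the orbit-decomposition identity
\[|\Pin(P;n)| = \sum_{\text{orbits }\mathcal{O}\subseteq \Pin(P;n)} |\mathcal{O}| = O_P \cdot 2^{n-|P|-1}\]
yields the stated formula.

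There is essentially no obstacle here: the corollary is a direct accounting, and every substantive step has already been proved. The only mild point to double-check is that the orbit-size formula of Theorem \ref{thm:orbitsize} is uniform across $\Pin(P;n)$, which follows because $v(\pi)$ depends only on $|P|$ via Lemma \ref{numval}; this uniformity is what allows us to pull $2^{n-|P|-1}$ out of the sum.
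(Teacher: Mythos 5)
Your proof is correct and follows essentially the same approach as the paper: partition $\Pin(P;n)$ into dual Foata--Strehl orbits, use Theorem~\ref{thm:orbitsize} (with Lemma~\ref{numval}) for the uniform orbit size $2^{n-|P|-1}$, and use Theorem~\ref{thm:count} via the unique FS-minimal representatives of Theorem~\ref{thm:uniqueFSminimal} to count orbits. You simply spell out the intermediate justifications (pinnacle preservation, uniformity of $v(\pi)$, uniqueness of representatives) that the paper's terse proof leaves implicit.
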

\begin{proof}
By Lemma \ref{thm:orbitsize}, each orbit of the Foata Strehl action has $2^{n-|P|-1}$ elements. By Theorem~ \ref{thm:count}, the number of orbits containing permutations with pinnacle set $P$, denoted $O_P$, is
\[O_P=\sum_{V\in \VV(P)}\prod_{p \in P}\binom{\nPV(p)}2\prod_{r \in [n] \setminus (P \cup V)} \nPV(r).\]
Hence, we obtain our result.
\end{proof}

Since computing the number of permutations with a given pinnacle set $P$ depends on the number of admissible vale sets, we now construct and establish account for the number of vale sets given~$P$.

\subsection{Creating and counting the number of admissible vale sets}
Given an nonnegative integer $\ell$, we recall that a weak composition is a sequence of nonnegative integers whose sum is $\ell$. For $\ell\geq 1$, define
$$C(\ell)= \left  \{ (t_1,t_2,\ldots, t_\ell) \vdash \ell \,\, \Big{|}\,\, \sum_{i=1}^k t_i \geq k  \text{ for each $k$ in } \{1,2,\dots,\ell\} \right\} $$
and 
$C(0)=\{()\}$ just contains the empty tuple.
Given a pinnacle set $P=\{p_1<p_2<\cdots< p_\ell\}$, for $1\leq i\leq \ell$, let $G_i = \{j \in [n]\setminus P : p_{i-1} < j < p_i\},$ where $p_0=1$. We call $G_i$  the $i$th gap set. Let $g_i = |G_i|$. For example, if $P=\{4,8, 11\}$ then $G_1=\{2,3\}, G_2=\{5,6,7\}$ and $G_3=\{9,10\}$. We now proceed to create and count all possible vale sets given a fixed pinnacle set.\\
\begin{proposition}\label{prop:valesets}
If $P=\{p_1<p_2<\ldots< p_{\ell}\}$ is an admissible pinnacle set, then the number of sets $V$ such that $(P,V)$ is admissible is
$$|\mathcal{V}(P)|=   \sum_{t \in C(\ell)}\prod_{i=1}^\ell \binom{g_i}{t_i}. $$
\end{proposition}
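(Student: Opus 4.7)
The plan is to establish a bijection between admissible vale sets $V \in \VV(P)$ and pairs $(t, (S_1,\ldots,S_\ell))$ consisting of a sequence $t = (t_1,\ldots,t_\ell) \in C(\ell)$ together with subsets $S_i \subseteq G_i$ of sizes $|S_i| = t_i$. Once this bijection is established, the formula $|\VV(P)| = \sum_{t \in C(\ell)} \prod_{i=1}^\ell \binom{g_i}{t_i}$ follows by summing the number of ways to choose the $S_i$'s over each $t \in C(\ell)$.

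First, I would invoke Proposition \ref{prop:PVadmissible} (using the equivalent condition (d) from Lemma \ref{lem:admissible}) to characterize admissibility concretely: $V$ is admissible for $P$ if and only if $1 \in V$, $|V| = \ell+1$, and $v_{i+1} < p_i$ for all $i \in \{1,\ldots,\ell\}$, where $v_1 < v_2 < \cdots < v_{\ell+1}$ are the elements of $V$. In particular, since $v_{\ell+1} < p_\ell$, every element of $V$ is strictly less than $p_\ell$, and since $V \cap P = \emptyset$ and $1 \notin G_i$ for any $i$, we obtain the disjoint-union decomposition $V = \{1\} \sqcup (V \cap G_1) \sqcup \cdots \sqcup (V \cap G_\ell)$.

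Next, given an admissible $V$, I would set $S_i := V \cap G_i$ and $t_i := |S_i|$, and check that $t = (t_1,\ldots,t_\ell) \in C(\ell)$. The sum condition $\sum_{i=1}^\ell t_i = \ell$ follows immediately from the decomposition above together with $|V| = \ell+1$. For the partial sum inequality, note that $V_{p_k} = V \cap \{1,2,\ldots,p_k - 1\} = \{1\} \cup S_1 \cup \cdots \cup S_k$, so $|V_{p_k}| = 1 + t_1 + \cdots + t_k$; the admissibility condition $v_{k+1} < p_k$ forces $|V_{p_k}| \geq k+1$, which rearranges to $t_1 + \cdots + t_k \geq k$ for every $k \in \{1,\ldots,\ell\}$.

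Conversely, given any $t \in C(\ell)$ and any choice of subsets $S_i \subseteq G_i$ with $|S_i| = t_i$, I would verify that $V := \{1\} \cup S_1 \cup \cdots \cup S_\ell$ is admissible. Clearly $1 \in V$ and $|V| = 1 + \sum t_i = \ell+1$. For each $k$, the number of elements of $V$ strictly less than $p_k$ equals $1 + t_1 + \cdots + t_k \geq k+1$ by the defining inequality of $C(\ell)$, which ensures $v_{k+1} < p_k$. These two constructions are mutual inverses, so the bijection is established, and the stated count follows. The main subtlety (though not really a difficulty) is bookkeeping the distinguished role of the element $1$, which always lies in $V$ but belongs to no gap set $G_i$; this accounts for the offset between $\sum t_i = \ell$ in $C(\ell)$ and $|V| = \ell+1$.
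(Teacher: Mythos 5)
Your proof is correct and follows essentially the same approach as the paper's: both establish the bijection between admissible vale sets $V$ and pairs $\bigl(t,(S_1,\ldots,S_\ell)\bigr)$ via the decomposition $V=\{1\}\sqcup(V\cap G_1)\sqcup\cdots\sqcup(V\cap G_\ell)$ and verify the $C(\ell)$ condition from $|V_{p_k}|\geq k+1$. The one stylistic improvement you make is invoking Proposition \ref{prop:PVadmissible} as a black box to certify admissibility in the converse direction, whereas the paper re-constructs an explicit permutation with pinnacle set $P$ and vale set $V$ from scratch, duplicating work already done in Proposition \ref{prop:PVadmissible}.
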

\begin{proof} 
For each weak composition $t=(t_1,t_2,\ldots, t_\ell) \in C(\ell)$, we will construct $\prod_{i=1}^\ell \binom{g_i}{t_i}$ admissible vale sets. Recall for $1\leq i\leq \ell$, we define $G_i = \{j \in [n]\setminus P : p_{i-1} < j < p_i\}$ where $p_0=1$ and $g_i=|G_i|$.
By definition note $G_i\cap G_j=\emptyset$ whenever $i\neq j$.

For each $G_i$, we choose a subset $T_i\subseteq G_i$ of cardinality $t_i$, recalling that for each $1\leq i\leq \ell$, we have that $\sum_{i=1}^kt_i\geq k$, for all $k\in \{1,\ldots, \ell\}$. There are $\prod_{i=1}^\ell\binom{g_i}{t_i}$ different choices for the collection of subsets $T_i\subseteq G_i$.

Once all the $T_i$ are chosen, let $V = \{1\} \cup \bigcup_{i = 1}^\ell T_i$.  Since $T_i\cap T_j=\emptyset$ then  
$|V|=1+\sum_{i=1}^\ell{t_i}=1+\ell$. 
We now prove that $(P,V)$ is  admissible by creating a permutation with pinnacle set $P$ and vale set $V$.
Sort $V$ such that $V=\{v_0,v_1,v_2,\dots, v_\ell\}$ with $v_0=1$ and $v_{i-1}<v_i$ for $i\in\{1,\dots,\ell\}.$ Let 
$$\alpha=v_0p_1v_1p_2v_2\cdots p_\ell v_\ell.$$
Note that $\alpha$ is a permutation of $V\cup P$ with pinnacle set $P$ and vale set $V$ because for any pinnacle $p_k$, by the defining condition of $C(\ell)$, we have chosen $\sum_{i=1}^k t_i$ elements in $V$, all of which are less than $p_k$. Thus, the vales $1,v_1,\ldots, v_k$ are all less than $p_k$, which implies that the pinnacle set (resp. vale set) of $\alpha$ is $P$ (resp. $V$). To complete the claim, extend $\alpha$ to a permutation of $[n]$ by inserting each element $r \in [n]\setminus(P\cup V)$  in ascending order between $v_i$ and $p_{i}$ if $ v_i< r<p_{i+1}$.    The resulting permutation $\pi$ has pinnacle set $P$ and vale set $V$. Hence, $(P,V)$ is admissible.

{To show these are the only admissible pairs, let $V'$ be any set such that $(P,V')$ is admissible. Then there is a permutation $\pi$ such that $\Pin(\pi)=P$ and $\textup{Vale}(\pi)=V'$. Partition $V'$ as $V'=\{1\}\cup \bigcup_{i=1}^\ell T'_i$ where for $1\leq i\leq \ell$, $T'_i$ is defined as
$$T'_i = \{v \in V' : p_{i-1} < v < p_i\}, \text{ with $p_0=1$.}$$
Note that, by definition of $G_i$, we have $T'_i \subseteq G_i$ and let  $t_i':=|T_i'|$.
Since $T_i'\cap T_j'=\emptyset$ whenever $i\neq j$ and  $\cup_{i=1}^\ell T'_i=V\setminus\{1\}$, we have that $\sum_{i=1}^{\ell}t_i'=|V|-1=\ell$. Thus, $\textbf{t}=(t_1',\ldots,t_\ell')$ is a weak composition of $\ell$. 
Further, $\textbf{t}$ is an element of $C(\ell)$ since for each $k$ in $\{1,2,\dots, \ell\}$ the elements in $\cup_{i=1}^k T'_i$ correspond to choices of vales not equal to 1 that are less than $p_k$, thus
$$\sum_{i = 1}^k t_i' = |V_{p_k}| - 1 = |V_{p_k}| -|P_{p_k}| + (k - 1) - 1 = k - 2 + \nPV(p_k) \geq k,$$
where the last inequality follows from Lemma \ref{lem:admissible}(\ref{part:i}). Hence, the set $V'$ is created via the construction described in this result by starting with $t \in C(\ell)$ and making the choices so that $T_i=T'_i\subseteq G_i$. Therefore, we have constructed all vale sets $V$ such that $(P,V)$ is admissible.
}
\end{proof}

We now give two examples to compute the number of permutations with pinnacle sets $P=\{5\}$ and $P=\{4,8,11\}$.
\begin{ex}
Let $n=8$ and consider the admissible pinnacle set $P=\{5\}$. As one is always a vale, and $G_1=\{2,3,4\}$ the possible vale sets are $\VV(P)=\{\{1,2\},\{1,3\},\{1,4\}\}$. Since $\nPV(5)=2$, by Corollary \ref{cor:main result} we have that 
\begin{align*}
    |\Pin(P;10)|&=2^6\left(\sum_{V\in \VV(P)}\binom{\nPV(5)}2\prod_{r \in [n] \setminus (P \cup V)} \nPV(r)\right)\\
    &=2^{6}\left[\underbrace{\binom{2}2\prod_{r \in \{3,4,6,7,8\}} \nPV(r)}_{V=\{1,2\}}+\underbrace{\binom{2}2\prod_{r \in \{2,4,6,7,8\}} \nPV(r)}_{V=\{1,3\}}+\underbrace{\binom{2}2\prod_{r \in \{2,3,6,7,8\}} \nPV(r)}_{V=\{1,4\}}\right]\\
    &=2^{6}\left[1\cdot2\cdot2\cdot1\cdot1\cdot1\cdot1+1\cdot1\cdot2\cdot1\cdot1\cdot1+1\cdot1\cdot1\cdot1\cdot1\cdot1\right]\\
    &=2^6(2^2+2+1)\\
    &=448.
\end{align*}
\end{ex}

\begin{ex}\label{example:vp}
Let $n=12$ and consider $P=\{4,8,11\}$ so $\ell=3$. 
Then $$C(\ell) = \{(1,1,1), (2,0,1),(2,1,0), (1,2,0), (3,0,0) \}.$$ The gaps are $G_1=\{2,3\}$, $G_2=\{ 5,6,7\}$, and $G_3=\{ 9,10\}$, so $(g_1,g_2,g_3)= (2,3,2)$.
The number of admissible vale sets 
is $$ |\mathcal{V}(P)| = \binom{2}{1}\binom{3}{1}\binom{2}{1}+\binom{2}{2}\binom{3}{0}\binom{2}{1}+\binom{2}{2}\binom{3}{1}\binom{2}{0}+\binom{2}{1}\binom{3}{2}\binom{2}{0} + \binom{2}{3}\binom{3}{0}\binom{2}{0}=23.$$
Note that the term
$\binom{2}{1}\binom{3}{1}\binom{2}{1}$
counts the vale sets with one element coming from each set of gaps, while the term $\binom{2}{2}\binom{3}{0}\binom{2}{1}$ counts the vale sets where 2 elements come from the first set of gaps, 0 come from the second set of gaps, and 1 comes from the third set of gaps, etc. Thus 
\begin{align}
    \VV(P)=\left\{\begin{matrix}\{1,2,5,9\},\{1,2,5,10\},\{1,2,6,9\},\{1,2,6,10\},\{1,2,7,9\},\{1,2,7,10\},\\\{1,3,5,9\},\{1,3,5,10\},\{1,3,6,9\},\{1,3,6,10\},\{1,3,7,9\},\{1,2,7,10\},\\
    \{1,2,3,9\},\{1,2,3,10\},\{1,2,3,5\},\{1,2,3,6\},\{1,2,3,7\},\\
    \{1,2,5,6\},\{1,2,5,7\},\{1,2,6,7\},\{1,3,5,6\},\{1,3,5,7\},\{1,3,6,7\}
    \end{matrix}\right\}.
\end{align}
From this a straight forward computation using Corollary \ref{cor:main result} yields $|\Pin(\{4,8,11\};12)|=132,480$.
\end{ex}

\begin{remark}
The partitions $t\in C(\ell)$ indexing the sum in Proposition \ref{prop:valesets} are counted by Catalan numbers. Particularly, $|C(\ell)|$ is the $\ell$th Catalan number. One way to see the equivalence is to create a bijection between Dyck paths and the elements of $C(\ell)$. 
A Dyck path is a path from $(0,0)$ to $(n,n)$ in which you are only allowed to move right and up and that lies strictly below (but may touch) the diagonal $y=x$. Given any $t=(t_1,t_2,\dots, t_\ell) \in C(\ell)$ consider the Dyck path created by moving up $t_i$ times then once to the right.  For instance, the 5 Dyck paths created from the 5 elements of $C(\ell)$ in Example \ref{example:vp} are:
$$\begin{array}{ccc}
    (1,1,1) &\to& \text{URURUR} \\
    (2,0,1) &\to& \text{UURRUR} \\
    (2,1,0) &\to& \text{UURURR} \\
    (1,2,0) &\to& \text{URUURR}  \\
    (3,0,0) &\to& \text{UUURRR}.
\end{array}$$
It is left as an exercise to the reader to show that this map is a bijection. 
\end{remark}
% \section{Moving between Foata-Strehl orbits}

\section{Algorithms to generate all permutations with a given pinnacle set}\label{sec:code}

As we saw in Section \ref{sec:FSfacts}, the dual Foata-Strehl action preserved the pinnacles of a permutation, but the orbits did not encompass all elements having the same set of pinnacles.  
In this section, we describe two algorithms that generate the set $
\Pin(P;n)$, and we compare their computational run times.

Given a pinnacle set $P$, let Algorithm 1 be the naive algorithm that runs through all permutations of $S_n$, computes their pinnacle sets and returns those permutations with pinnacle set $P$.
Let Algorithm 2 be the algorithm that replicates the constructions detailed in Section \ref{sec:count}. More specifically, given a set $P$, it first runs through all admissible vale sets $V$ using the criteria in Proposition \ref{prop:PVadmissible}. Then, for a given pair $(P,V)$, it constructs all canonical $PV$-arrangements using the recursive construction described in the proof of Lemma \ref{lemma:cannonical}. Then, it creates all FS-minimal permutations from the canonical PV-arrangements as described in the proof of Theorem \ref{thm:count}. Finally, it applies the dual Foata-Strehl action on the FS-minimal permutations to create all permutations with pinnacle set $P$ and vale set $V$, as guaranteed by Theorem \ref{thm:uniqueFSminimal}.

In Table \ref{tab:faster}, we provide the run times of Algorithms 1 and 2 applied to all pinnacle sets of permutations in $\mathfrak{S}_8$. The  code and sample computations for these algorithms is provided at \textcolor{blue}{\href{https://github.com/8080509/Pinnacles_of_Permutations}{github.com/8080509/Pinnacles\_of\_Permutations}}.
\begin{table}
\centering
\begin{tabular}{|c|c|c|c|c|}\hline
    \multirow{2}{*}{$n$}& \multirow{2}{*}{$P$}& \multirow{2}{*}{$|\Pin(P;n)|$} & Run time& Run time\\
    &&&Algorithm 1&Algorithm 2
    \\\hline\hline
    \multirow{28}{*}{8}&$\emptyset$& 128 & 327.32 ms & 0.30 ms \\\cline{2-5}
    &\{3\} & 64 & 286.39 ms & 0.21 ms  \\\cline{2-5}
    &\{4\} & 192 & 300.00 ms & 0.63 ms \\\cline{2-5}
    &\{5\} & 448 & 346.05 ms & 1.12 ms \\\cline{2-5}
    &\{6\} & 960 & 360.80 ms & 2.65 ms \\\cline{2-5}
    &\{7\} & 1984 & 293.21 ms & 6.78 ms \\\cline{2-5}
    &\{8\} & 4032 & 271.34 ms & 9.45 ms \\\cline{2-5}
    &\{3, 5\} & 32 & 411.87 ms & 0.11 ms  \\\cline{2-5}
    &\{3, 6\} & 96 & 480.09 ms & 0.54 ms  \\\cline{2-5}
    &\{3, 7\} & 224 & 436.53 ms & 0.59 ms \\\cline{2-5}
    &\{3, 8\} & 480 & 275.81 ms & 1.13 ms \\\cline{2-5}
    &\{4, 5\} & 96 & 309.15 ms & 0.43 ms \\\cline{2-5}
    &\{4, 6\} & 288 & 306.61 ms & 1.31 ms \\\cline{2-5}
    &\{4, 7\} & 672 & 280.14 ms & 1.64 ms \\\cline{2-5}
    &\{4, 8\} & 1440 & 291.30 ms & 3.67 ms \\\cline{2-5}
    &\{5, 6\} & 576 & 324.70 ms & 1.66 ms \\\cline{2-5}
    &\{5, 7\} & 1376 & 307.15 ms & 3.75 ms \\\cline{2-5}
    &\{5, 8\} & 2976 & 285.69 ms & 10.77 ms \\\cline{2-5}
    &\{6, 7\} & 2400 & 297.79 ms & 6.08 ms \\\cline{2-5}
    &\{6, 8\} & 5280 & 338.10 ms & 14.92 ms \\\cline{2-5}
    &\{7, 8\} & 8640 & 341.82 ms & 20.45 ms \\\cline{2-5}
    &\{3, 5, 7\} & 16 & 298.76 ms & 0.13 ms \\\cline{2-5}
    &\{3, 5, 8\} & 48 & 282.71 ms & 0.19 ms \\\cline{2-5}
    &\{3, 6, 7\} & 48 & 269.83 ms & 0.20 ms \\\cline{2-5}
    &\{3, 6, 8\} & 144 & 328.53 ms & 0.53 ms \\\cline{2-5}
    &\{3, 7, 8\} & 288 & 305.97 ms & 0.84 ms \\\cline{2-5}
    &\{4, 5, 7\} & 48 & 296.36 ms & 0.20 ms  \\\cline{2-5}
    &\{4, 5, 8\} & 144 & 341.79 ms & 0.71 ms \\\cline{2-5}
    &\{4, 6, 7\} & 144 & 294.63 ms & 0.47 ms \\\cline{2-5}
    &\{4, 6, 8\} & 432 & 296.74 ms & 1.31 ms \\\cline{2-5}
    &\{4, 7, 8\} & 864 & 358.77 ms & 2.68 ms \\\cline{2-5}
    &\{5, 6, 7\} & 288 & 294.47 ms & 0.99 ms \\\cline{2-5}
    &\{5, 6, 8\} & 864 & 305.26 ms & 2.44 ms \\\cline{2-5}
    &\{5, 7, 8\} & 1728 & 334.79 ms & 4.98 ms \\\cline{2-5}
    &\{6, 7, 8\} & 2880 & 276.56 ms & 8.08 ms\\\hline
\end{tabular}
\caption{Run times of four algorithms constructing all permutations in $\mathfrak{S}_8$ with pinnacle set $P$.}\label{tab:faster}
\end{table}

\section{Future directions}\label{sec:future}
We end with a few open problems for further study. 

\begin{problem}
Algorithm 2 provides an efficient algorithm to generate $\Pin(P,n)$. Are there any other algorithms for generating $\Pin(P,n)$ that are more efficient than Algorithm 2.  \end{problem}

In \cite{pinn}, Davis et al. give explicit formulas for the number of permutations with  pinnacle sets of size 0, 1, and 2 as well as two extremal cases.
\begin{problem}\label{prob:1}
Theorem \ref{thm:count} provides an expression for the number $O_P$ of orbits containing permutations with pinnacle set $P$ under the dual Foata-Strehl action. Find explicit expressions for $\Pin(P,n)$ with $|P|\geq 3$ using Corollary \ref{cor:main result}.
\end{problem}
If $P$ is a pinnacle set and $S$ is a peak set, by Theorem~\ref{thm:BBS} and Corollary~\ref{cor:main result},
we know
\[|\Pin(P,n)|=2^{n-|P|-1}O_P\qquad\mbox{and}\qquad |\Pk(S,n)|=2^{n-|S|-1}p_S(n),\]
where $p_S(n)$ is the peak polynomial of $S$ and $O_P$ is given by 
\[O_P=\sum_{V\in \VV(P)}\prod_{p \in P}\binom{\nPV(p)}2\prod_{r \in [n] \setminus (P \cup V)} \nPV(r).\]
In light of the similarity between these equations and the fact that in the pinnacle setting the power of two describes the size of each dual Foata-Strehl orbit in $|\Pin(P,n)|$, and $O_P$ counts number of orbits, we pose the following question.

\begin{problem}
Is there a group action on permutations which preserves peaks sets, such that there are exactly $p_S(n)$ many orbits each of size $2^{n-|S|-1}$?
\end{problem}

We have presented the following conjecture at several talks concerning peaks, descents, and pinnacles of permutations over the past year.   An elegant proof of this conjecture was recently given in the preprint \cite[Corollary~ 10]{GaeGao}.  We present the conjecture here to have it recorded in the literature.

\begin{conjecture} 
If $S$ is an admissible peak set, then the set $\Pk(S,n)$ of permutations with peak set $S$ in 
$\mathfrak{S}_n$ can be partitioned into subsets of permutations of the same length, and the size of these subsets is palindromic about the value $\binom{n}{2}$.
\end{conjecture}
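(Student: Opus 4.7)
The plan is to prove palindromicity of the length generating function
\[f_S(q)\;:=\;\sum_{\pi\in\Pk(S,n)}q^{\mathrm{inv}(\pi)},\]
where $\mathrm{inv}(\pi)$ is the number of inversions of $\pi$.  Palindromicity of the subset sizes in the length partition of $\Pk(S,n)$ is exactly the assertion $f_S(q)=q^{\binom{n}{2}}f_S(q^{-1})$, which I take to be the intended meaning of ``palindromic about the value $\binom{n}{2}$.''  The first step is a reduction via reversal: the map $\pi\mapsto\pi^{\mathrm{rev}}$, where $\pi^{\mathrm{rev}}_i=\pi_{n+1-i}$, is a bijection $\Pk(S,n)\to\Pk(n+1-S,n)$ (setting $n+1-S:=\{n+1-s:s\in S\}$) satisfying $\mathrm{inv}(\pi^{\mathrm{rev}})=\binom{n}{2}-\mathrm{inv}(\pi)$.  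Consequently $f_S(q)=q^{\binom{n}{2}}f_{n+1-S}(q^{-1})$, and the palindromic property is equivalent to the ``mirror identity'' $f_S(q)=f_{n+1-S}(q)$.  Whenever $S$ is self-dual, i.e.\ $S=n+1-S$, the reverse map is itself a length-reversing self-bijection of $\Pk(S,n)$ and palindromicity is immediate, so the real work is in the non-self-dual case.

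The heart of the proof is then to produce a length-preserving bijection $\Psi\colon\Pk(S,n)\to\Pk(n+1-S,n)$; composing $\Psi$ with reversal yields a length-reversing self-bijection of $\Pk(S,n)$ and thereby palindromicity.  My first attempt for $\Psi$ starts with conjugation by the long element, $\pi\mapsto w_0\pi w_0$, which is length-preserving.  A direct computation shows that conjugation sends a peak of $\pi$ at position $k$ to a valley of $w_0\pi w_0$ at $n+1-k$ and vice versa, so the peak set of $w_0\pi w_0$ is governed by the internal valleys of $\pi$ rather than by $S$ directly.  To repair the peak set, I would apply a controlled sequence of $\varphi_x$ moves from Section~\ref{sec:FSfacts} that redirects the offending valley positions into peak positions landing in $n+1-S$.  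By Theorem~\ref{pinpreserve}, the $\varphi_x$ moves preserve the pinnacle set, which provides the main structural anchor for the deformation.

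The principal obstacle is designing this peak-correcting cleanup so that it is simultaneously a bijection and length-preserving: each $\varphi_x$ preserves pinnacles but may shift peak positions and may change the inversion count, so one must specify a canonical choice of moves and verify both properties in lockstep.  A possibly cleaner alternative that bypasses $\Psi$ altogether is to prove a $q$-analogue of the Billey--Burdzy--Sagan formula~\eqref{eq:BBS} of the form
\[f_S(q)\;=\;q^{m(S)}\,p_S(q;n)\prod_{j\in J_S}\bigl(1+q^j\bigr),\]
for a shift $m(S)\in\mathbb{N}$, a palindromic ``$q$-peak polynomial'' $p_S(q;n)$ specializing to the BBS polynomial at $q=1$, and an index multiset $J_S\subseteq[n-1]$ chosen so that all factors are palindromic about the common center $\tfrac12\binom{n}{2}$.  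The baseline $S=\emptyset$ is encouraging: a direct enumeration of the ``decreasing-then-increasing'' permutations in $\Pk(\emptyset,n)$, parametrized by the subset $T\subseteq\{2,\dots,n\}$ of values placed to the left of $1$, yields $\mathrm{inv}(\pi)=\sum_{a\in T}(a-1)$ and hence $f_\emptyset(q)=\prod_{j=1}^{n-1}(1+q^j)$, which is palindromic of degree $\binom{n}{2}$.  Establishing the product shape for general $S$ would follow by adapting the BBS induction while carefully tracking inversions, and the main technical challenge is that the BBS recursion manipulates permutations locally without obviously preserving inversion weight, so the $q$-analogue demands a refined inversion bookkeeping at each recursive step.
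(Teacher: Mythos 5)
The paper does not prove this statement; it records it as a conjecture and points to a proof elsewhere, namely \cite[Corollary~10]{GaeGao}. So there is no in-paper argument to compare against, and the comparison has to be with the conjecture's content itself.

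Your reductions are correct and useful: reversal $\pi\mapsto\pi^{\mathrm{rev}}$ satisfies $\mathrm{inv}(\pi^{\mathrm{rev}})=\binom{n}{2}-\mathrm{inv}(\pi)$ and maps $\Pk(S,n)$ bijectively to $\Pk(n+1-S,n)$, so palindromicity of $f_S$ is equivalent to the mirror identity $f_S=f_{n+1-S}$, with the self-dual case immediate; the base case $f_\emptyset(q)=\prod_{j=1}^{n-1}(1+q^j)$ is also verified correctly. But neither route you sketch actually establishes the mirror identity, and that is the entire content of the conjecture. The first route has a concrete flaw beyond being incomplete: the maps $\varphi_x$ of Section~\ref{sec:FSfacts} preserve \emph{pinnacle} sets (Theorem~\ref{pinpreserve}), not \emph{peak} sets, and they change the inversion number (swapping $w_2$ and $w_4$ around $x$ changes $\mathrm{inv}$ by $|w_2|-|w_4|$ plus cross-inversion corrections), so a ``$\varphi_x$ cleanup'' after conjugation by $w_0$ is not a natural candidate for a length-preserving, peak-set-controlled bijection; you would essentially have to invent a different family of moves. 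Moreover, after conjugation by $w_0$ the resulting peak set depends on the valley set of $\pi$, which is not constant on $\Pk(S,n)$, so the target $\Pk(n+1-S,n)$ is not even hit uniformly before cleanup. The second route (a $q$-analogue of the Billey--Burdzy--Sagan formula with palindromic factors) is closer in spirit to what is actually needed, and indeed the cited Gaetz--Gao result is about $q$-analogues of peak polynomials, but you leave the inductive bookkeeping entirely to the reader. As written, this is a plausible program rather than a proof: the key lemma, $f_S=f_{n+1-S}$ for non-self-dual $S$, is never established.
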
 

We remark that as the sets $\Pin(P;n)$ are preserved by multiplying by $w_0$ on the left (reversing the order of the permutations), the sets $\Pin(P;n)$ have the same palindromicity property as $\Pk(S,n)$. However, unlike $\Pk(S;n)$, they are not unimodal or log-concave in general.

\begin{problem}
For what pinnacle sets $P$ and values of $n$ are the sets $\Pin(P;n)$ unimodal?
\end{problem}

\end{document}